\documentclass[12pt]{amsart}
\usepackage{amsmath, amsthm, amscd, amssymb, amsfonts, latexsym}
\usepackage{fullpage}
\usepackage{bm}
\usepackage[all]{xy}
\usepackage{tikz}
\usepackage{fancybox,soul,marvosym}
\usepackage{hyperref}

\usepackage{bbold}

\renewcommand{\1}{\mathbb{1}}

\newcommand{\kommentar}[1]{}

\renewcommand{\d}{\delta}

\newcommand{\F}{\mathbb F}

\newcommand{\T}{\mathrm{Trun}^L}

\renewcommand{\pmod}[1]{\,(\mathrm{mod}\,#1)}

\newtheorem{lem}{Lemma}[section]

\newtheorem{thm}[lem]{Theorem}
\newtheorem{defn}[lem]{Definition}

\newtheorem{conj}[lem]{Conjecture}
\newtheorem{question}[lem]{Question}
\theoremstyle{definition}

\newtheorem{fact}[lem]{Expected Fact}
\newtheorem{rem}[lem]{Remark}

\author{Vivian Kuperberg}
\author{Matilde Lal\'in}
\address{Vivian Kuperberg: ETH Z\"urich - Departement Mathematik,
R\"amistrasse 101,
8092 Z\"urich,
Switzerland} \email{vivian.kuperberg@math.ethz.ch}

\address{Matilde Lal\'in:  D\'epartement de math\'ematiques et de statistique, Universit\'e de Montr\'eal, CP 6128, succ. Centre-ville, Montreal, QC H3C 3J7, Canada}\email{matilde.lalin@umontreal.ca}

\subjclass[2020]{Primary 11N05; Secondary 11T06, 11A41}
\keywords{prime truncations; irreducible polynomial truncations; ditribution of primes}

\title{Distributions of left prime truncations}

\begin{document}
\begin{abstract}
The prime number 357686312646216567629137 is notable because of the unusual property that it remains prime successively on removing the left digit until there are no remaining digits. We explore here the distributions of the number of left prime truncations of integers and of the number of irreducible truncations of polynomials with coefficients over a finite field, focusing on the proportion among all $\ell$-digit numbers or polynomials, their variance, and the maximal proportion.
 \end{abstract}

\maketitle
\section{Introduction}

The number 
\begin{equation}\label{eq:longest-left-truncatable-prime-in-base-10}
357686312646216567629137
\end{equation}
has the unusual property that every one of its \emph{left truncations} in base $10$ is prime; that is, for all $k \le 24$, its last $k$ digits constitute a prime number. A number with this property, and no digits equal to $0$, is known in the recreational math literature \cite{AngellGodwin,oeis-lefttruncatable} as a \emph{left-truncatable} prime, and \eqref{eq:longest-left-truncatable-prime-in-base-10} is the longest example in base $10$. It is a folklore conjecture \cite{oeis-totalnumber} that for any $b \ge 2$, there are only finitely many left-truncatable primes in base $b$. Angell and Godwin \cite{AngellGodwin} heuristically suggest that the maximal length of a left-truncatable prime in base $b$ is
\begin{equation*}
\sim \frac{b^2e}{\phi(b)\log b}.
\end{equation*}

More generally, one can simply consider the set of all (left) truncations of an integer $n$ written in base $b$, which leads to a broad class of questions about the distribution of prime truncations. We begin by defining:
\begin{defn}
Let $b \ge 2$ and let $n \in \mathbb N$ where the base $b$ expansion of $n$ is $n = \sum_{j = 0}^\ell a_jb^j$ for $0 \le a_j \le b-1$ and $a_\ell \ne 0$. A \emph{truncation} of $n$ is any integer of the form $\sum_{j=0}^k a_jb^j$, where $0 \le k \le \ell$ and $a_k \ne 0$.
\end{defn}
For example, when $b = 10$, the number $34913$ has truncations $34913$, $4913$, $913$, $13$, and $3$, whereas $83003$ has truncations $83003$, $3003$, and $3$.

It is natural to ask how many prime truncations a typical integer has, as well as to investigate the distribution of the number of prime truncations of $n$ as $n$ varies. Note that the case of left-truncatable primes concerns precisely those $n < b^\ell$ with $\ell$ prime truncations.
\begin{question}\label{qn:counting-truncations-integers}
What proportion of the truncations of an $\ell$-digit number in base $b$ are prime? Precisely, what is the typical proportion among all $\ell$-digit numbers, the variance, and the maximal proportion?
\end{question}
Questions similar to this were studied, for example, by G\'omez-P\'erez and Shparlinski \cite{GomezPerezShparlinski}, who considered the number of prime truncations of a fixed infinite digit sequence in base $b$ and constructed sequences where the limiting proportion of the number of truncations that are prime is large.  They also provided an upper bound for the number of sequences whose truncations are all prime.

In parallel, one can ask the same questions in $\mathbb F_q[T]$, where $\mathbb F_q$ is a finite field of $q$ elements.
\begin{defn}\label{defn:polynomial-truncations-base-T}
Let $\mathbb F_q$ be a finite field and let $f(T) \in \mathbb F_q[T]$ be given by $f(T) := \sum_{j = 0}^\ell a_j T^j$, where $a_j \in \mathbb F_q$ and $a_\ell \ne 0$. A \emph{$T$-truncation} of $f$ is any polynomial of the form $\sum_{j=0}^i a_j T^j$, where $a_i \ne 0$ and $1 \le i \le \ell$.
\end{defn}
Mullen and Shparlinski \cite[Problem 31]{MullenShparlinski} considered for a (possibly infinite) sequence of elements in $\F_q[T]$, the successive polynomials that can be formed by truncating the sequence. They asked for lower and upper bounds for the maximum length of a consecutive irreducible sequence of polynomials. A lower bound was found by Chou and Cohen \cite[Theorem 1.2]{ChouCohen}. These sequences and their factorization properties were further studied by G\'omez-P\'erez, Ostafe, and Sha \cite{GomezPerezOstafeSha}.

In this setting we consider the direct analog of Question \ref{qn:counting-truncations-integers}:
\begin{question}\label{qn:counting-truncations-polynomials}
What proportion of the $T$-truncations of a polynomial in $\mathbb F_q[T]$ of degree $\ell$ are irreducible? Precisely, what is the typical proportion among all degree-$\ell$ polynomials, the variance, and the maximal proportion?
\end{question}

As is common for questions about polynomials over finite fields, there are two natural limits to consider in order to get an asymptotic answer to Question \ref{qn:counting-truncations-polynomials}: namely, the limit as $\ell \to \infty$ and the limit as $q \to \infty$. One unusual aspect of this problem is that \emph{both} limits immediately appear to have natural analogs in the integer setting! For Question \ref{qn:counting-truncations-integers}, either the number of digits $\ell$ or the base $b$ can be taken to grow to infinity; the former is analogous to the degree of a polynomial $f$ increasing and the latter can be taken to be analogous to the size $q$ of the coefficient ring growing large.

However, the analogy between the integer and polynomial setting can be made more concrete by instead considering a generalization of Question \ref{qn:counting-truncations-polynomials}. We can interpret the expression $f(T) = \sum_{j = 0}^\ell a_j T^j$ as a base-$T$ expansion of the polynomial $f$. For any $b(T) \in \mathbb F_q[T]$, one can instead consider a base-$b(T)$ expansion of $f$, given by $f(T) = \sum_{j=0}^\ell \tilde{a_j}(T) b(T)^j$ for appropriate coefficients $\tilde{a_j}(T)$. Accordingly, we can make the following generalization of Definition \ref{defn:polynomial-truncations-base-T}:
\begin{defn}
Let $\mathbb F_q$ be a finite field, and fix $b(T) \in \mathbb F_q[T]$. Let $f(T) \in \mathbb F_q[T]$ be given by $f(T) := \sum_{j = 0}^\ell a_j(T) b(T)^j$, where $a_j(T) \in \mathbb F_q[T]$, $\deg(a_j)<\deg(b)$, and $a_\ell \ne 0$. A \emph{$b(T)$-truncation} of $f$ is any polynomial of the form $\sum_{j=0}^i a_j(T) b(T)^j$, where $a_i \ne 0$ and $1 \le i \le \ell$.
\end{defn}
We will sometimes refer to $b(T)$-truncations as $b$-truncations or even simply truncations, when it is clear from context. We refer to the coefficients $a_j \in \mathbb F_q[T]$ as the \emph{digits} of $f(T)$ in base $b(T)$.

It is then natural to ask about $b(T)$-truncations of polynomials in $\mathbb F_q[T]$, generalizing Question \ref{qn:counting-truncations-polynomials}. This generalization notably introduces another set of limiting behavior, when $|b(T)| \to \infty$, or equivalently when $m \to \infty$ where $b$ is a polynomial of degree $m$. 

The large-$m$ limit in the polynomial case serves as an immediate parallel to the large-$b$ limit in the integer case. We shall see in general that estimates in both the large-$m$ limit and the large-$q$ limit over $\mathbb F_q[T]$ bear resemblance to the large-$b$ limit in $\mathbb Z$, but exhibit substantial differences from the large-$\ell$ limit in their respective settings. Throughout, as we consider results in various settings with $b, \ell, m$ or $q \to \infty$, the implied constants in our error terms will be independent of all of these parameters unless explicitly specified otherwise. That is to say, although we generally specify for each identity which variable is tending to infinity so that the written main term is larger than the written error term, all equations should hold in any limit unless explicitly specified otherwise.

The goal of this paper is to provide answers, some proven and some conjectural, to Questions \ref{qn:counting-truncations-integers} and \ref{qn:counting-truncations-polynomials}.

Given a positive integer $n$, let $\T_b(n)$ be the number of primes among the left truncations of $n$ in base $b$. For example, $\T_{10}(34913)=3$ because $34913, 13,$ and $3$ are prime, while $4913$ and $913$ are not, while $\T_{10}(83003)=2$ because $83003$ and $3$ are prime, while $3003$ is not.

We consider the average of the number of left truncations over all the numbers with $\ell$ digits in base $b$: namely,
\begin{equation}\label{eq:defn-of-avg-truncations-integer-case}
\langle \T_b\rangle_{b^\ell}:=\frac{1}{b^\ell}\sum_{n< b^\ell} \T_b(n).
\end{equation}
\begin{thm}\label{thm:averagenf}
As $b\rightarrow \infty$, we have
\[\langle \T_b\rangle_{b^\ell}=\frac{1}{\log b}\sum_{h=1}^\ell \frac{1}{h}+\frac{1}{\log^2 b}\sum_{h=1}^\ell \frac{1}{h^2} +O\left(\frac{1}{\log^3 b}\right),\]
and as $\ell\rightarrow \infty$, we have
\[\langle \T_b\rangle_{b^\ell}=\left(1-\frac{1}{b}\right)\frac{\log \ell}{\log b}+O\left(\frac 1{\log b}\right).\]
\end{thm}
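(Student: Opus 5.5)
Interchanging the order of summation, each prime $p$ with exactly $h$ digits in base $b$, i.e.\ $b^{h-1}\le p<b^h$ for some $1\le h\le \ell$, is a truncation of precisely $b^{\ell-h}$ integers $n<b^\ell$: the lower $h$ digits of $n$ must equal those of $p$, and the upper $\ell-h$ digits are free. (The condition $a_k\neq 0$ in the definition of truncation is automatic, since $p$ has exactly $h$ digits.) Hence
\[
\langle \T_b\rangle_{b^\ell}=\sum_{h=1}^{\ell} b^{-h}\bigl(\pi(b^h)-\pi(b^{h-1})\bigr).
\]
Both asymptotics then reduce to estimating this sum with the prime number theorem in the form
\[
\pi(x)=\frac{x}{\log x}+\frac{x}{\log^2 x}+O\!\left(\frac{x}{\log^3 x}\right),
\]
valid uniformly for $x\ge 2$. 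The small value $\pi(b^0)=\pi(1)=0$ needs no special treatment.

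For the limit $b\to\infty$, I write $x=b^h$, so that $\log x=h\log b$. This gives
\[
b^{-h}\pi(b^h)=\frac{1}{h\log b}+\frac{1}{h^2\log^2 b}+O\!\left(\frac{1}{h^3\log^3 b}\right),
\]
while the term $b^{-h}\pi(b^{h-1})$ is at most $b^{-1}$. Summing over $h$, the main terms are exactly the two displayed sums. The errors total $O(\log^{-3} b)\sum_h h^{-3}$, which is $O(\log^{-3}b)$ uniformly in $\ell$.

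The main obstacle is the subtracted contribution $\sum_h b^{-h}\pi(b^{h-1})\approx \frac{1}{b\log b}\sum_h \frac{1}{h-1}$. This is $O\!\left(\frac{\log \ell}{b\log b}\right)$, and it is only absorbed into $O(\log^{-3}b)$ if $\log \ell \ll b/\log^2 b$. I would note this explicitly: either the statement implicitly has $\ell$ not enormous relative to $b$, or the term is simply kept as the factor $(1-1/b)$ that appears in the second formula. Concretely, the $h$-th term is really
\[
\frac{1}{h\log b}-\frac{1}{b(h-1)\log b}+\cdots,
\]
so the uniform version is the second formula with the $O(1/\log^2 b)$ correction.

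For the limit $\ell\to\infty$, I use only the leading term. For $h\ge2$,
\[
b^{-h}\bigl(\pi(b^h)-\pi(b^{h-1})\bigr)=\frac{1}{h\log b}-\frac{1}{b(h-1)\log b}+O\!\left(\frac{1}{h^2\log b}\right)
=\Bigl(1-\frac1b\Bigr)\frac{1}{h\log b}+O\!\left(\frac{1}{h^2\log b}\right).
\]
Summing over $2\le h\le \ell$ gives $(1-\frac1b)\frac{H_\ell}{\log b}+O(1/\log b)$. The $h=1$ term is $b^{-1}\pi(b)=O(1/\log b)$. Since $H_\ell=\log \ell+O(1)$, this yields the claimed formula with error $O(1/\log b)$, uniform in $b$.
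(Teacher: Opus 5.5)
Your proof is correct and follows the paper's argument. You interchange the sums to get $\sum_{h=1}^{\ell} b^{-h}(\pi(b^h)-\pi(b^{h-1}))$, as in Lemma~\ref{lem:meannumber}, then apply the prime number theorem; your $\ell\to\infty$ computation is uniform, and your caveat about the $b\to\infty$ formula is also right, a point the paper's one-line deduction does not address: the term $-\frac1b\sum_{h<\ell}\pi(b^h)/b^h\asymp\frac{\log \ell}{b\log b}$ fits inside $O(1/\log^3 b)$ only when $\log\ell\ll b/\log^2 b$ (for instance, $\ell$ fixed), so that estimate is not uniform in $\ell$, despite the paper's stated convention that implied constants are independent of all parameters.
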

The main term in each of these estimates, which both follow from Lemma \ref{lem:meannumber}, comes from an asymptotic evaluation of $\left(1-\frac 1b \right) \sum_{h=1}^\ell \frac{\pi(b^h)}{b^h},$ where $\phi$ is the prime counting function.

Given $b(T) \in \mathbb F_q[T]$ of degree $m \ge 1$, we similarly define $\T_{q,b}(f)$, the number of left prime $b(T)$-truncations for a polynomial $f \in \F_q[T]$, and study its average over all the polynomials with up to $\ell$ digits in base $b(T)$ (that is, polynomials of degree $d$ with $d < m\ell$). This average is defined as
\begin{equation}\label{eq:defn-average-truncations-in-polynomial-case}
\langle \T_{q,b}\rangle_{b^\ell}:=\frac{1}{q^{m\ell}}\sum_{\substack{f\in \F_q[T]\\\deg(f)<m \ell}} \T_{q,b}(f).
\end{equation}
In this direction, we prove the following result.
\begin{thm}\label{thm:averageff}
For $b(T) \in \mathbb F_q[T]$ of degree $m \ge 1$, the average number of left prime $b(T)$-truncations for polynomials with $\ell$ digits in base $b(T)$ is asymptotically given by
\begin{equation*}
\langle \T_{q,b}\rangle_{b^\ell} = \sum_{\substack{1 \le h < m \ell \\ h \equiv -1 \pmod m}} \frac 1h + O\left(\frac{\log \ell}{mq} + \frac 1q \right) \text{ as } q \to \infty,
\end{equation*}
\begin{equation*}
\langle \T_{q,b}\rangle_{b^\ell} = \frac 1m \sum_{h=1}^\ell \frac 1h + \frac{q}{(q-1)m^2} \sum_{h=1}^\ell \frac 1{h^2} + O\left(\frac 1{m^3} + \frac{q \log^2(m\ell)}{q^{\frac{m}{2}}}\right) \text{ as } m \to \infty,
\end{equation*}
and
\begin{equation*}
\langle \T_{q,b}\rangle_{b^\ell} = \frac{1}m \left(1-\frac{1}{q^m}\right)\log \ell + O\left(\frac 1m + \frac 1q \right) \text{ as } \ell \to \infty.
\end{equation*}
\end{thm}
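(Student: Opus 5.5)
The plan is to reduce the average to a weighted sum of counts of irreducible polynomials of each degree, and then expand that sum separately in each of the three limits.

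\textbf{Reduction to a weighted prime count.} We first rewrite $\langle \T_{q,b}\rangle_{b^\ell}$ by swapping the order of summation. Fix $i$ with $1\le i\le \ell$. The $i$-digit truncations $g=\sum_{j<i}a_j b^j$ with $a_{i-1}\neq 0$ are exactly the polynomials of degree $h$ with $m(i-1)\le h<mi$. Each such $g$ is the $i$-digit truncation of exactly $q^{m(\ell-i)}$ polynomials $f$ with $\deg f<m\ell$, since the remaining digits $a_i,\dots,a_{\ell-1}$ are free. Let $\pi_q(h)$ denote the number of monic irreducibles of degree $h$, so that $(q-1)\pi_q(h)$ counts all irreducibles of degree $h$. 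The truncation with $i=1$ and $\deg a_0=0$ is a constant and is never irreducible, so $h\ge 1$. Writing $h=mi-s$ with $1\le s\le m$, we obtain the exact identity
\[
\langle \T_{q,b}\rangle_{b^\ell}=\sum_{i=1}^{\ell}\sum_{s=1}^{m}(q-1)\,\pi_q(mi-s)\,q^{-mi},
\]
where the term with $mi-s=0$ is omitted. We then insert the prime polynomial theorem $\pi_q(h)=\frac{q^h}{h}+O\bigl(\frac{q^{h/2}}{h}\bigr)$. The main part becomes $\sum_{i,s}\frac{(q-1)q^{-s}}{mi-s}$. The error part is $\sum_{i,s}\frac{(q-1)q^{-s}\,q^{-(mi-s)/2}}{mi-s}$, which is dominated by small degrees $h$ and is $O(1/q)$ uniformly; it is $O\bigl(q\log^2(m\ell)/q^{m/2}\bigr)$ when $m$ is large.

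\textbf{The three limits.} We treat each limit as an expansion of the main part.

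As $q\to\infty$, the weight $(q-1)q^{-s}$ equals $1-\frac1q$ for $s=1$ and is $O(q^{1-s})$ for $s\ge 2$. The $s=1$ terms give $\sum_{h\equiv -1 \pmod m,\ h<m\ell}\frac1h$ up to an error $\frac1q\sum\frac1h$. The $s=2$ terms give $\frac1q\sum_i\frac{1}{mi-2}=O\bigl(\frac{\log \ell}{mq}+\frac1q\bigr)$, and the terms with $s\ge 3$ are smaller.

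As $m\to\infty$, we expand $\frac1{mi-s}=\frac1{mi}+\frac{s}{(mi)^2}+O\bigl(\frac{s^2}{(mi)^3}\bigr)$ and use $\sum_{s\ge1}(q-1)q^{-s}=1$ and $\sum_{s\ge1}s(q-1)q^{-s}=\frac{q}{q-1}$. This produces $\frac1m\sum\frac1h+\frac{q}{(q-1)m^2}\sum\frac1{h^2}$, together with an error $O(m^{-3})$ from $\sum s^2 q^{-s}\ll 1$ and tails $s>m$ of size $q^{-m}$.

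As $\ell\to\infty$, we use the total weight $\sum_{s=1}^m(q-1)q^{-s}=1-q^{-m}$ together with $\frac1{mi-s}-\frac1{mi}\ll \frac{s}{m^2i^2}$ for $i\ge 2$. After summing in $i$, this gives $\frac1m(1-q^{-m})\log\ell+O(1/m)$. The $i=1$ block and the prime-counting errors contribute $O(1/m+1/q)$.

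\textbf{Main obstacle.} The hardest part will be controlling the block $i=1$ uniformly. There the expansion in $s/(mi)$ breaks down, since $h=m-s$ can be tiny. Handling this block together with the prime-counting errors at small $h$ must give exactly the stated error terms $O(1/q)$ and $O\bigl(q\log^2(m\ell)/q^{m/2}\bigr)$. My first attempt will split at $s\le m/2$ versus $s>m/2$: for $s\le m/2$ the expansion is valid with $h\asymp m$, and for $s>m/2$ the weight $q^{-s}\le q^{-m/2}$ kills everything, once we use the trivial bound $\pi_q(h)\le q^h/h$ and sum over the $O(m\ell)$ degrees.
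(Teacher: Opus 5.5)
\textbf{Verdict: same route as the paper, with one step that fails as written (the $O(1/q)$ bound on the prime-counting error) and an easy repair.}

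Your reduction and the three expansions follow the paper's route. The paper also uses the exact identity expressing the average as $\sum_i q^{-mi}$ times the number of irreducibles in the $i$-th degree block. Its main term is likewise $(q-1)\sum_s q^{-s}\sum_{h\equiv -s}1/h$, and it also expands $1/(mi-s)$ with the first block split at $s\approx m/2$; that split is the paper's identity for $\sum_t 1/((m-t)q^t)$.

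\textbf{Where it fails.} The step you state as ``the error part \dots\ is $O(1/q)$ uniformly'' is false if you only use $\pi_q(h)=q^h/h+O(q^{h/2}/h)$.
\begin{itemize}
\item The degree $h=1$ term of your error sum comes with $s=1$ when $m\in\{1,2\}$ (for example $m=1$, $i=2$, or $m=2$, $i=1$).
\item That term contributes $\asymp (1-\frac1q)q^{-1/2}$, so you only get $O(q^{-1/2})$.
\item For fixed $m\le 2$ (in particular $b=T$) and $\ell$ small compared with $q$, this is not absorbed by $O\bigl(\frac{\log \ell}{mq}+\frac1q\bigr)$. So the $q\to\infty$ statement does not follow as written.
\end{itemize}
In the other two regimes the extra $q^{-1/2}$ is harmless. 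For $m\ge 3$ the $h=1$ term has weight at most $1/q$. For $m\le 2$ it is swallowed by the $O(1/m^3)$ term in the large-$m$ formula and by the $O(1/m)$ term in the large-$\ell$ formula.

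\textbf{The repair.} Use the exact count $\pi_q(h)=\frac1h\sum_{d\mid h}\mu(d)q^{h/d}$, as the paper does. The correction $\frac1h\sum_{d\mid h,\,d>1}\mu(d)q^{h/d}$ then vanishes at $h=1$ and is at most $2q^{h/2}/h$ for $h\ge 2$. Since $mi=h+s\ge h+1$, each correction term is at most $2q^{-h/2}/h$, so the total is $\ll\sum_{h\ge 2}q^{-h/2}\ll 1/q$ uniformly. Your bound $\ll q^{(1-m)/2}\log(m\ell)$ for large $m$ is unaffected. The paper organizes the same correction by writing $h=jr$ with $j\ge 2$ and using $\lfloor rj/m\rfloor+1\ge (rj+1)/m$.

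With this fix the rest of your argument goes through, including the $O(1/m)$ bound for the $i=1$ block as $\ell\to\infty$ and the $s\le m/2$ versus $s>m/2$ treatment of that block as $m\to\infty$.
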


The structure of the asymptotic in the polynomial case as $m \to \infty$ (and to a lesser extent as $q \to \infty$) closely mirrors that of the asymptotic in the integer case as $b \to \infty$, where the factor of $\frac 1{\log b}$ in the integer case becomes the factor of $\frac 1m$ in the polynomial case, and the quantity $b$ in the integer case is replaced by the norm of $b(T)$ (i.e. $q^m$) in the polynomial setting.
The analogy between the polynomial case and the integer case as $\ell\to \infty$ is more direct.
Curiously, the factor of $\frac q{q-1}$ in the second term of the $m \to \infty$ estimate in $\mathbb F_q[T]$ does not have a clear parallel in the integer case.

These results are cleaner in the special case when $b(T) = T$ and $m = 1$, where we have
\begin{equation*}
\langle \T_{q,T} \rangle_{T^\ell} = \frac{q-1}{q} \sum_{1 \le h < \ell} \frac 1h + O\left(\frac{1}{q}\right),
\end{equation*}
where the implied constant is independent of $\ell$.

We also consider the variance of the number of left truncations over all numbers with $\ell$ digits in base $b$, given by
\begin{equation}\label{eq:varnf}
 \mathrm{Var}_{b^\ell}(\T_b):=\frac{1}{b^\ell} \sum_{n< b^\ell} \left(\T_b(n)-\langle \T_b\rangle_{b^\ell}\right)^2=\frac{1}{b^\ell} \sum_{n< b^\ell}\T_b(n)^2 -\langle \T_b\rangle_{b^\ell}^2.
\end{equation}
The variance is difficult to evaluate unconditionally in the integer setting, as it relies on estimates of primes in large arithmetic progressions. Roughly speaking, evaluating the variance involves counting primes less than $x = b^\ell$ that are $\equiv p \mod b^i$, where $1 \le i < \ell$ and $p \le b^i$ ranges over primes. The modulus of the arithmetic progression is then generally a (possibly large) power of $b^\ell$ (that is, $x$), which goes beyond unconditional results on primes in arithmetic progressions (see for example \cite[Theorem 5.13]{IwaniecKowalski}). Some better unconditional estimates (for example \cite{banks-shparlinski-MR4042874}) are known for particularly structured moduli, such as when the modulus is a power of a fixed prime; it would be interesting to investigate applications of these results to our setting but at the moment they seem unable to give unconditional results. Moreover, as $\ell \to \infty$, values of $i$ that are close to $\ell$ correspond to moduli that are nearly as large as $x$ itself. Accordingly the variance is very difficult to estimate, even conditionally, in the large-$\ell$ limit. However, as $b$ grows large, an asymptotic estimate is possible assuming the Generalized Riemann Hypothesis:
\begin{thm}\label{thm:varnf} Assume the Generalized Riemann Hypothesis. As $b \rightarrow \infty$, we have,
 \begin{align*}
 \mathrm{Var}_{b^\ell}(\T_b)=
 & \frac{1}{\log b}\sum_{h=1}^\ell \frac{1}{h}+\frac{1}{\log^2 b}\left(\frac{b}{\phi(b)} -1\right)\left[\left(\sum_{h=1}^\ell \frac{1}{h}\right)^2-\sum_{h=1}^\ell \frac{1}{h^2}\right]\\
 &+O\Bigg(\frac{\log \ell \log \log b}{\log^3 b} + \frac{\ell^5 \log^3 b}{\sqrt b}\Bigg),
 \end{align*}
 where $\phi$ is Euler's totient function, and the implied constant in the error term is independent of $\ell$ and $b$.
\end{thm}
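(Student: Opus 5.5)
Write $\T_b(n)=\sum_{h=1}^{\ell}\1_{P}(n \bmod b^h)\,\1[a_{h-1}\ne 0]$. Here $n\bmod b^h$ is the $h$-digit truncation, and the condition $a_{h-1}\neq 0$ says it really has $h$ digits, i.e.\ $n\bmod b^h\ge b^{h-1}$. Expanding the square gives
\[\frac{1}{b^\ell}\sum_{n<b^\ell}\T_b(n)^2=\langle \T_b\rangle_{b^\ell}+2\sum_{1\le i<j\le \ell}\frac{1}{b^\ell}\#\{n<b^\ell: n\bmod b^i=p,\ n\bmod b^j=p' \},\]
where $p$ ranges over primes in $[b^{i-1},b^i)$ and $p'$ over primes in $[b^{j-1},b^j)$. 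The inner count factors through $n\bmod b^j$. It equals $b^{\ell-j}$ times the number of pairs $(p,p')$ with $p'\equiv p\pmod{b^i}$. So the off-diagonal term is $2\sum_{i<j} b^{-j}\sum_{b^{i-1}\le p<b^i}\bigl(\pi(b^j;b^i,p)-\pi(b^{j-1};b^i,p)\bigr)$.

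The diagonal term $\langle \T_b\rangle_{b^\ell}$ is already known from Theorem \ref{thm:averagenf} (via Lemma \ref{lem:meannumber}). Its expansion is $\frac{1}{\log b}\sum 1/h+\frac{1}{\log^2 b}\sum 1/h^2+O(\log^{-3}b)$.

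For the cross terms, I assume GRH in the form $\pi(x;q,a)=\mathrm{Li}(x)/\phi(q)+O(\sqrt{x}\log^2(qx))$ for $(a,q)=1$. This is valid for primes $p\nmid b$. The $O(\log b)$ primes $p\mid b$ contribute negligibly, since then $p'\equiv p\pmod{b^i}$ forces $p'$ to share a factor with $b$ and so essentially no $p'$ qualifies. The GRH error, summed over $p<b^i$ and $i<j\le\ell$ and divided by $b^j$, gives at most about $\sum_{i<j} b^{i}\,b^{j/2}\, j^2\log^2 b/b^j\ll \ell^{4}\log^2 b\,\sum_{i<j}b^{i-j/2}$. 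This is only small when $i<j/2$. That is a problem: for $i\ge j/2$ the modulus exceeds $\sqrt{x}$ and GRH is useless.

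\textbf{Main obstacle.} I need to handle the large-modulus range. For $j/2\le i<j$ the count $\pi(b^j;b^i,p)$ is at most the number of integers $\equiv p \pmod{b^i}$ up to $b^j$, which is $b^{j-i}$. Brun–Titchmarsh gives the sharper bound $\ll b^{j-i}/(\phi(b^i)/b^{i})\cdot 1/((j-i)\log b)$. I would try to show that the heuristic main term is also correct there, up to an error. Since the claimed error $\ell^5\log^3 b/\sqrt b$ only beats main terms when $b$ is large compared to $\ell$, I expect the intended argument to be a cruder one. When $j\ge i+1$, the modulus $b^i$ compared with $x=b^j$ satisfies $x/q=b^{j-i}\ge b$. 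So GRH summed over the range gives an error $\ll b^{i+j/2}\log^2/b^j$, which for $j=i+1$ is $b^{(i-1)/2}$ and is not small. I would therefore count pairs by swapping the roles: fix $p'$, note that $p=p'\bmod b^i$ is determined, and use that $p'\bmod b^i$ being prime is an event about the residue. Concretely, $\sum_{p'}\1_P(p'\bmod b^i)$ counts primes $p'<b^j$ whose residue mod $b^i$ is prime. This can be written as $\sum_{r<b^i,\,r \text{ prime}}\pi(\cdot;b^i,r)$, and the averaged (Bombieri–Vinogradov/GRH-over-residues) version is needed. I would apply GRH to each residue class and accept the loss, hoping it gives the $\ell^5\log^3 b/\sqrt b$ shape, perhaps via a Barban–Davenport–Halberstam-type variance bound under GRH. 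This is the step I am least sure of.

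\textbf{Main term.} The main term $\frac{1}{b^j}\frac{b^i}{\phi(b^i)}\bigl(\pi(b^i)-\pi(b^{i-1})\bigr)\bigl(\mathrm{Li}(b^j)-\mathrm{Li}(b^{j-1})\bigr)$ uses $\phi(b^i)=b^{i-1}\phi(b)$. This gives $\frac{b}{\phi(b)}$ times the product of the single-level densities, each $\approx(1-\frac1b)\frac{1}{h\log b}(1+\frac{1}{h\log b})$. Summing, $2\sum_{i<j}\frac{b}{\phi(b)}\frac{1}{ij\log^2 b}=\frac{b}{\phi(b)\log^2 b}\bigl[(\sum 1/h)^2-\sum1/h^2\bigr]$ plus lower order. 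Subtracting $\langle \T_b\rangle^2=\frac{1}{\log^2 b}(\sum1/h)^2+\dots$ and combining with the diagonal $\frac{1}{\log^2 b}\sum 1/h^2$ yields exactly the stated bracket with coefficient $\frac{b}{\phi(b)}-1$. The remaining secondary terms carry an extra factor $\frac{b}{\phi(b)}\ll\log\log b$ and a factor $\sum 1/h\ll\log \ell$ over $\log^3 b$. This produces the $O(\log\ell\log\log b/\log^3 b)$ error.
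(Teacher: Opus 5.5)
Your reduction matches the paper's: expanding the square, the diagonal is $\langle \T_b\rangle_{b^\ell}$, the off-diagonal is $2\sum_{i<j}b^{-j}\#\{(p,p'): p'\equiv p \pmod{b^i}\}$, and your main-term bookkeeping is right. That is, $\phi(b^i)=b^{i-1}\phi(b)$ produces the factor $\frac{b}{\phi(b)}$, and you then subtract $\langle\T_b\rangle_{b^\ell}^2$. One slip: with unnormalized counts the main term carries $1/\phi(b^i)$, not $b^i/\phi(b^i)$.

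The step you call the main obstacle is a genuine gap, and it is the analytic content of the theorem. Applying GRH to each progression $\pi(\cdot\,;b^i,p)$ separately and summing $|E|$ over the $\asymp b^i/(i\log b)$ primes $p$ throws away all cancellation in the $p$-variable. As you computed, this gives $b^{i-j/2}$, which for $j=i+1$ is $b^{(i-1)/2}$ and is not even $o(1)$. ``Accepting the loss'' cannot produce the $\ell^5\log^3 b/\sqrt b$ error. Brun--Titchmarsh gives only upper bounds of the right order, not an asymptotic. Fixing $p'$ and asking whether $p'\bmod b^i$ is prime is the same problem restated.

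The missing idea is to use the bilinear structure. Detect $d_2\equiv d_1\pmod{b^i}$ by orthogonality of characters modulo $b^i$ \emph{before} summing over $d_1$. With von Mangoldt weights and $i<j$, the pair sum then factors as
\[\frac{1}{\phi(b^i)}\sum_{\chi \bmod b^i}\bigl(\psi(\chi,b^i)-\psi(\chi,b^{i-1})\bigr)\bigl(\psi(\overline{\chi},b^j)-\psi(\overline{\chi},b^{j-1})\bigr).\]
The principal character gives $b^{i+j}(1-\frac1b)^2/\phi(b^i)=b^{j+1}(1-\frac1b)^2/\phi(b)$. GRH applied to \emph{both} factors bounds each nonprincipal term by $\ll b^{(i+j)/2}i^2j^2\log^4 b$. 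The $\phi(b^i)$ characters are exactly offset by the normalization, so the total error is $\ll b^{(i+j)/2}i^2j^2\log^4 b$. The relative error is therefore at most $b^{-1/2}$ up to logarithms, uniformly, however large the modulus $b^i$ is compared with $\sqrt{b^j}$. Partial summation in each variable then removes the logarithmic weights and prime powers are discarded; this is the paper's Lemma \ref{lem:lambdalambdanf} $\Rightarrow$ Lemma \ref{lem:11}. After that, your main-term computation goes through.

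Your Barban--Davenport--Halberstam remark can be made into an equivalent argument. Cauchy--Schwarz over $p$ plus Parseval gives, for the $\Lambda$-weighted error,
$\sum_p|E(b^j;b^i,p)|\ll \pi(b^i)^{1/2}\bigl(\phi(b^i)^{-1}\sum_{\chi\ne\chi_0}|\psi(\chi,b^j)|^2\bigr)^{1/2}\ll b^{(i+j)/2}$ up to logarithms under GRH. However, you neither state nor prove this bound, and your stated fallback (per-residue GRH) fails. So the proposal does not establish the error term.
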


In the large $\ell$ limit, we provide heuristic evidence for a conjectural evaluation of the variance. This conjecture appears difficult to prove, as it seems one would need to strongly leverage the fact that the variance is an average over counts of primes in different arithmetic progressions.
\begin{conj} \label{conj:varnf} As $\ell \rightarrow \infty$, we expect that
  \begin{align*}
 \mathrm{Var}_{b^\ell}(\T_b)
 = & \frac{1}{\log^2 b}\left(1 -\frac{1}{b}\right)^2\left(\frac{b}{\phi(b)}-1\right)\log^2\ell+O_b \left(\log \ell \right).
 \end{align*}
\end{conj}
Notably, we conjecture that the limiting behavior of the variance as $b$ or $\ell$ grows large is dominated by different terms (and, perhaps, different phenomena). The main term in Theorem \ref{thm:varnf} has size about $\log \ell$, and would disappear in the large-$\ell$ limit compared to the conjectured main term of size about $\log^2\ell$. This seems to reflect a clustering phenomenon: when an integer $n$ is not coprime to the base $b$, then none (or almost none) of its truncations are prime, so the prime truncations are more densely packed among those $n$ that are coprime to $b$. This phenomenon disappears as $b \to \infty$, but for fixed $b$, we will see a difference when we restrict our attention to those $n$ which are coprime to $b$. We discuss this in more detail after the heuristic leading to Conjecture \ref{conj:varnf} in Section \ref{sec: the integer setting}.

We also study the variance in the polynomial setting, defined by
\begin{align}\label{eq:variance-definition-gen}
\mathrm{Var}_{b^\ell}(\T_{q,b}):=& \frac 1{q^{m\ell}}
\sum_{\substack{f\in \F_q[T]\\\deg(f)<m \ell}}
\left(\T_{q,b}(f)-\langle \T_{q,b}\rangle_{b^\ell}\right)^2\nonumber\\=&\frac 1{q^{m\ell}}
\sum_{\substack{f\in \F_q[T]\\\deg(f)<m \ell}}\T_{q,b}(f)^2-\langle \T_{q,b}\rangle_{b^\ell}^2.
\end{align}
Analogously to the integer setting, we are able to estimate the variance when $q \to \infty$ or when $m \to \infty$. We also provide a heuristic towards a conjecture for the $\ell \to \infty$ limit.
\begin{thm}\label{thm:varff}
Let $q$ be a prime power, let $b \in \mathbb F_q[T]$ be a polynomial of degree $m \ge 1$, and let $\ell \ge 1$. Define $\mathrm{Var}_{b^\ell}(\T_{q,b})$ as in \eqref{eq:variance-definition-gen}. As $q \to \infty$, $\mathrm{Var}_{b^\ell}(\T_{q,b})$ is given by
\begin{multline}\label{eq: variance polynomials q large}
\sum_{\substack{1\leq h <m\ell \\ h\equiv -1\pmod{m}}}\left(\frac{1}{h}-\frac{1}{h^2}\right)\\
+O\left(\ell q^{-\frac{m}{2}}+\mathbb 1_{m \ge 2}\frac{\log\ell q^{\frac{m+1}{2}}}{m^2\Phi(b)}+\frac{\log^2\ell}{mq}+\frac{1}{q}+\left|\frac{q^m}{\Phi(b)}-1\right|\frac{\log^2\ell}{m^2}\right),
\end{multline}
and as $m \to \infty$, $\mathrm{Var}_{b^\ell}(\T_{q,b})$ is given by
\begin{multline}\label{eq: variance polynomials m large}
\frac{1}{m}\sum_{h=1}^\ell \frac{1}{h}+\frac{1}{m^2}\left(\frac{q^m}{\Phi(b)}-1\right)\left(\sum_{h=1}^\ell \frac{1}{h}\right)^2-\frac{1}{m^2}\left(\frac{q^m}{\Phi(b)}-\frac{q}{q-1}\right) \sum_{h=1}^\ell \frac{1}{h^2}\\
+O\left(\frac{\log \ell \log \log (q^m)}{m^3}+\ell q^{-\frac{m}{2}} + q^{1-\frac m2} \log^2(m\ell) \Big(1 + \frac{\log \ell}{m}\Big)\right).\end{multline}
\end{thm}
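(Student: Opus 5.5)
The variance is computed by expanding $\T_{q,b}(f)^2$ as a double sum over pairs of truncation levels and counting pairs of simultaneously irreducible truncations. Write $\T_{q,b}(f)=\sum_{i=1}^{\ell}\1_{P}(f_i)$, where $f_i=\sum_{j<i}a_jb^j$ is the truncation keeping the lowest $i$ digits. We only count it when its top digit $a_{i-1}$ is nonzero. Then
\[
\frac{1}{q^{m\ell}}\sum_{\deg f<m\ell}\T_{q,b}(f)^2=\langle \T_{q,b}\rangle_{b^\ell}+2\sum_{1\le i<k\le\ell}\frac{1}{q^{mk}}\#\{(g,h):\ g=f_i,\ h=f_k \text{ both irreducible}\}.
\]
The diagonal is exactly the mean, which is already handled by Theorem~\ref{thm:averageff}.

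For the off-diagonal term with $i<k$, we need irreducible $g$ with $(i-1)m\le\deg g<im$, and irreducible $h\equiv g\pmod{b^i}$ with $(k-1)m\le \deg h<km$. Since the leading digit of $g$ is nonzero, $g$ is a genuine residue class mod $b^i$. Because $g$ is irreducible of degree at least $m(i-1)$, it is coprime to $b$ apart from a negligible set of at most $O(m)$ exceptions: if $\deg g<m$ and $g\mid b$, or $g$ is a factor of $b$ when $i=1$. For the $h$ count we use the function-field prime polynomial theorem in arithmetic progressions with Weil's RH, via Dirichlet characters mod $b^i$:
\[
\#\{h \text{ monic irreducible},\ \deg h=n,\ h\equiv g \bmod b^i\}=\frac{q^n}{n\,\Phi(b^i)}+O\Big(\frac{mi\, q^{n/2}}{n}\Big),
\]
with $\Phi(b^i)=q^{m(i-1)}\Phi(b)$. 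Summing over $n$ in the window of degree, with the $(q-1)$ leading coefficients, gives
\[
\frac{q^m}{\Phi(b)}\,\frac{q-1}{q}\cdot\frac{1}{q^{m}}\sum_{n\equiv\cdot}\frac{q^{n-m(k-1)}}{n}
\]
times the density of $g$'s, up to error. The product then factors into a main term of the shape $\frac{q^m}{\Phi(b)}\,\mathbb E[\1_P(f_i)]\,\mathbb E[\1_P(f_k)]$. The term $\frac{q^m}{\Phi(b)}$ corrects for the independence failure caused by coprimality with $b$. The $(1-\frac1{q^m})$-type factors arise from the nonzero-leading-digit conditions.

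Subtracting $\langle\T\rangle^2=\sum_i E_i^2+2\sum_{i<k}E_iE_k$ leaves:
\[
\mathrm{Var}=\sum_i\big(E_i-E_i^2\big)+\Big(\frac{q^m}{\Phi(b)}-1\Big)2\sum_{i<k}E_iE_k+\text{errors}.
\]
Here $E_i$ is the irreducibility probability of the $i$-th truncation, computed in the proof of Lemma-level results behind Theorem~\ref{thm:averageff}.

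\emph{$q\to\infty$.} Here $E_i$ is essentially $\sum_h 1/h$ over $h$ in the $i$-th degree window weighted by $q^{-(\text{gap})}$. Only the top degree $h=im-1$ survives, which gives $E_i=\frac1{im-1}+O(1/(mq))$. So $E_i-E_i^2$ produces $\sum_{h\equiv-1}(\frac1h-\frac1{h^2})$. The cross term contributes $\left|\frac{q^m}{\Phi(b)}-1\right|\frac{\log^2\ell}{m^2}$. The $\frac{\log^2\ell}{mq}$ and $\frac1q$ errors come from the lower-order window contributions.

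\emph{$m\to\infty$.} Here $E_i=\frac1{mi}+\frac{q}{(q-1)m^2i^2}+\dots$. Expanding gives $\frac1m\sum\frac1h$ from the linear terms. The cross term gives
\[
\frac1{m^2}\Big(\frac{q^m}{\Phi(b)}-1\Big)\Big[\big(\textstyle\sum\frac1h\big)^2-\sum\frac1{h^2}\Big],
\]
and $-\sum E_i^2$ together with the second-order mean correction gives the $\left(\frac{q}{q-1}-1\right)\frac{1}{m^2}\sum\frac1{h^2}$ piece. Combining these yields the stated coefficient $-\frac1{m^2}\left(\frac{q^m}{\Phi(b)}-\frac q{q-1}\right)$.

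The main obstacle is the error bookkeeping in the progression count. The Weil error $O(mi\,q^{n/2}/n)$, summed over $g$ (about $q^{mi}/(mi)$ of them) and normalized by $q^{mk}$, gives roughly $q^{mi-mk}q^{mk/2}$. This is fine when $k-i$ is large, but for $k=i+1$ with small $i$ the modulus $b^i$ is comparable to $q^{\deg h}$. I would handle this by trivially bounding the pairs with $\deg h< 2\deg b^i$, which yields the $\ell q^{-m/2}$ term. The small-degree cases ($i=1$, $m\ge2$) produce the $\mathbb 1_{m\ge2}\,\log\ell\, q^{(m+1)/2}/(m^2\Phi(b))$ term. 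Finally, the $\log\log(q^m)$ error comes from bounding $\frac{q^m}{\Phi(b)}\ll\log\log q^m$ multiplying the third-order terms.
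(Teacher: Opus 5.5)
Your algebraic skeleton is correct and matches the paper. The diagonal gives the mean, and the off-diagonal main term is $\frac{q^m}{\Phi(b)}E_iE_k$. Hence $\mathrm{Var}=\sum_i(E_i-E_i^2)+\bigl(\frac{q^m}{\Phi(b)}-1\bigr)2\sum_{i<k}E_iE_k$ plus errors, and your expansions of $E_i$ in the two limits produce the stated main terms.

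\textbf{The gap.} It lies in the error analysis of the correlation count, and it is not a bookkeeping issue.

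\begin{itemize}
\item You fix $g$ and count irreducible $h\equiv g\pmod{b^i}$ of degree $n$, with the pointwise Weil error $O(mi\,q^{n/2}/n)$. The main term for a single class is only $\approx q^{n-mi}/n$, so this count is not even asymptotic once $n<2mi$.
\item Summing that error trivially over the $\asymp q^{mi}/(mi)$ values of $g$ gives your $q^{m(i-k/2)}$. This is $\gg 1$ on the entire range $i<k\le 2i$, not just for $k=i+1$ with small $i$. Indeed, for $k=i+1$ one has $\deg h\le m(i+1)-1<2mi=2\deg b^i$ for every $i\ge1$.
\item You propose to bound this range trivially and say it yields $\ell q^{-m/2}$. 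That cannot work, because these pairs carry genuine main-term mass $\frac{q^m}{\Phi(b)}E_iE_k\asymp\frac{1}{ikm^2}$, totalling $\asymp\frac{\log\ell}{m^2}$.
\item You need their asymptotic, not an upper bound, to cancel them against $2\sum_{i<k}E_iE_k$ in $\langle\T_{q,b}\rangle_{b^\ell}^2$. Any upper bound leaves an error of order at least $\frac{1}{m^2}$ (indeed $\frac{\log\ell}{m^2}$). That swamps the $\frac{1}{m^2}\sum\frac1{h^2}$ terms in both asymptotics and does not tend to $0$ as $q\to\infty$.
\end{itemize}

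\textbf{The missing idea.} You need cancellation in the sum over $g$ as well as over $h$. Do not fix $g$; detect $g_2\equiv g_1\pmod{b^i}$ with characters in both variables simultaneously. Write each $g_j$ as a leading coefficient times a monic polynomial and use $\Lambda$-weights. The count becomes
\[
\frac{1}{\Phi(b^i)}\sum_{\chi \bmod b^i}\ \sum_{\alpha,\beta\in\F_q^\times}\chi(\alpha)\overline{\chi}(\beta)\,M(h_1;\Lambda\chi)\,M(h_2;\Lambda\overline{\chi}).
\]
\begin{itemize}
\item The sum over leading coefficients kills the odd characters.
\item Apply $|M(d;\Lambda\chi)|\le (c-1)q^{d/2}$, with $c\le mi$ the conductor degree, to both factors. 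This bounds the nonprincipal part by $\ll (h_1+m)^2q^{\frac{h_1+h_2}{2}+1}$; this is Lemma \ref{lem:RMT-general}.
\item Since $h_1\le m(k-1)-1$ and $h_2\le mk-1$, dividing by $q^{mk}$ gives $\ll q^{-m/2}$, with geometric decay in $k-i$, uniformly over all $i<k$. Summing gives the $\ell q^{-m/2}$ term with no range discarded.
\item Pass from $\Lambda$ to $\1_{\mathcal P}$ by removing proper prime powers $g_1=f^k$. Their principal-character contribution is largest when $h_1<m$, and this gives the $\mathbb 1_{m\ge 2}$ term.
\end{itemize}
With this bilinear bound replacing your pointwise one, the rest of your argument goes through as in the paper.
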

The above result holds unconditionally, since the Generalized Riemann Hypothesis is a theorem in this context. In the case when $m = 1$, $b(T) = T$, and $q \to \infty$, the variance is given by the rather simpler 
\begin{equation*}
\mathrm{Var}_{T^\ell}(\T_{q,T})=\sum_{\substack{1\leq h <\ell }}\left(\frac{1}{h}-\frac{1}{h^2}\right)+O\left(\ell q^{-1/2}\right).
\end{equation*}

We will provide evidence for the following conjecture concerning the behavior of $\mathrm{Var}_{b^\ell}(\T_{q,b})$ as $\ell \to \infty$.
\begin{conj} \label{conj:varff}
With the variance $\mathrm{Var}_{b^\ell}(\T_{q,b})$ defined in \eqref{eq:variance-definition-gen}, we expect, as $\ell \rightarrow \infty$,
\begin{align*}
\mathrm{Var}_{b^\ell}(\T_{q,b})
&=\frac{1}{m^2} \left(1-\frac{1}{q^m}\right)^2\left(\frac{q^m}{\Phi(b)}-1\right) \log^2 \ell+O_{m,q}\left(\log \ell\right).
\end{align*}
In particular, when $m=1$ and $b(T) = T$, we expect as $\ell \rightarrow \infty$ that
\begin{align*}
\mathrm{Var}_{T^\ell}(\T_{q,T}) =&\frac{q-1}{q^2}\log^2 \ell +O_{q}\left(\log \ell\right).
\end{align*}
\end{conj}
We can again compare the (sometimes conjectural) variance in the integer and polynomial settings. Just as for the average, the variance in the large-$\ell$ limit for the two settings are very closely related. Similarly the large-$m$ limit for polynomials closely resembles the large-$b$ limit for integers, with the caveat that the polynomial case has an extra term of the form $\frac 1{m^2(q-1)}\sum_{h=1}^\ell \frac 1{h^2}$. The large-$q$ limit also bears resemblance to the large-$m$ limit for polynomials and the large-$b$ limit for integers.

Finally, we explore the \emph{maximal} proportion of prime truncations in both the integer and polynomial settings. For a fixed number of digits $\ell$ in either setting, we expect that for a large enough coefficient set (so as $b \to \infty$ in the integers or as $m$ or $q \to \infty$ in $\mathbb F_q[T]$), we will always be able to find an integer (or polynomial) with $\ell$ prime truncations. On the other hand as $\ell \to \infty$, we expect in the integer case that the maximal number of prime truncations is $\asymp \frac{\ell \log b}{\log \ell}$, whereas in the polynomial case that the maximal number of irreducible truncations is $\asymp \frac{m \ell \log q}{\log \ell}$. All of these predictions come from the Cram\'er random model for prime numbers. They are related to the purely probabilistic problem of estimating the maximum out of $b^\ell$ samples of the sum of $\ell$ Bernoulli random variables, where the $k$th variable is $1$ with probability $\frac 1k$ and $0$ otherwise. These sums of Bernoulli random variables are known in the probability literature as \emph{records} (for example, they relate to the number of years with record high rainfall over a fixed span of time; see \cite{Arnold-Balakrishnan-Nagaraja-MR1628157}). However, we are concerned with the maximum of $b^\ell$ samples, which is exponential in the number of random variables we are summing. This corresponds to studying the extreme tail of the distribution of this sum, which does not seem to have been previously studied in a relevant range. Our conjectures are heuristically supported in Sections \ref{sec:maxnf} and \ref{sec:maxff}.

One could ask similar questions about right prime truncations \cite{vanderPoorten}, which highlight a clear difference between the integer and polynomial cases. Indeed, the problem for polynomials with base $b(T)=T$ is entirely symmetric, as a polynomial is irreducible if and only if its reciprocal, the polynomial resulting from reversing the order of the coefficients, is also irreducible. However, this is not necessarily the case when considering an integer in base $b$ or a polynomial in base $b(T)\not =T$.  In the integer case, right prime truncations have been studied by \cite{Dubickas, MillerREU}. Ideas involving mirroring of primes also appear in \cite{GomezPerezShparlinski}. We intend to explore the analogues of Questions \ref{qn:counting-truncations-integers} and \ref{qn:counting-truncations-polynomials} for right prime truncations in forthcoming work.

\section*{Acknowledgments} The authors are grateful to Maxim Gerspach and Igor Shparlinski for helpful comments. This work is supported by the NSF Mathematical Sciences Research Program through the grant DMS-2202128,  the Natural Sciences and Engineering Research Council of Canada, RGPIN-2022-03651, and the Fonds de recherche du Qu\'ebec - Nature et technologies, Projet de recherche en \'equipe 345672.

\section{The integer setting}\label{sec: the integer setting}

Recall that, given $n$ a positive integer, $\T_b(n)$ denotes the number of (left) prime truncations of $n$. That is,
\[\T_b(n):=\sum_{\substack{d\leq n\\ d\equiv n \pmod{b^{\lfloor \log_b d \rfloor +1 }}}}\1_\mathbb{P}(d),\]
where $\1_\mathbb{P}$ denotes the indicator function for the primes. Any positive integer $d$ has $\lfloor\log_b d\rfloor+1$ digits in base $b$, so $d \equiv n \pmod{b^{\lfloor \log_b d \rfloor +1 }}$ if and only if the last digits of $n$ coincide with $d$.

We start by proving Theorem \ref{thm:averagenf}; that is, by estimating the average number $\langle \T_b \rangle_{b^\ell}$ of left truncations over all integers with $\ell$ digits in base $b$ (see \eqref{eq:defn-of-avg-truncations-integer-case}).
\begin{lem} \label{lem:meannumber} We have
\begin{equation}\label{eq:Tidentity}
\langle \T_b\rangle_{b^\ell}=  \left(1 -\frac 1{b}\right)\sum_{h=1}^{\ell - 1}\frac {\pi(b^h)}{b^h} + \frac{\pi(b^\ell)}{b^\ell}.
\end{equation}
\end{lem}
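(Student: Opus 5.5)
The plan is to swap the order of summation: count, for each prime $p<b^\ell$, how many $n<b^\ell$ have $p$ as a truncation. By the formula for $\T_b(n)$ given above,
\[
\sum_{n<b^\ell}\T_b(n)=\sum_{\substack{p<b^\ell\\ p \text{ prime}}}\#\left\{n<b^\ell:\ n\equiv p \pmod{b^{h(p)}}\right\},
\]
where $h(p)=\lfloor\log_b p\rfloor+1$ is the number of base-$b$ digits of $p$. We also need $p\le n$, but this is automatic: if $n\equiv p\pmod{b^{h}}$ and $n\ge 0$, then either $n=p$ or $n\ge b^h>p$. So each prime is counted by all $n$ whose last $h(p)$ digits coincide with the digits of $p$.

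Next we group the primes by their number of digits $h$, so that $b^{h-1}\le p<b^h$ and $1\le h\le\ell$. For such a prime, the number of $n\in[0,b^\ell)$ with $n\equiv p \pmod{b^h}$ is exactly $b^{\ell-h}$, since $b^h\mid b^\ell$. The number of primes with exactly $h$ digits is $\pi(b^h)-\pi(b^{h-1})$; no power of $b$ is prime except possibly $b^1$ when $b$ is prime, and $b$ has $2$ digits, so using $\pi(b^h-1)=\pi(b^h)$ here needs care. The clean way is to count primes $<b^h$, i.e.\ $\pi(b^h-1)$. Since $b^h$ is prime only if $h=1$ and $b$ is prime, I will either note that the lemma's $\pi(b^h)$ should be read as counting primes $<b^h$, or check that the convention is harmless. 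Dividing by $b^\ell$ gives
\[
\langle\T_b\rangle_{b^\ell}=\sum_{h=1}^{\ell}\frac{\pi(b^h)-\pi(b^{h-1})}{b^h},
\]
with $\pi(b^0)=\pi(1)=0$.

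Finally, summation by parts gives the stated form. We write
\[
\sum_{h=1}^\ell\frac{\pi(b^h)}{b^h}-\sum_{h=1}^{\ell}\frac{\pi(b^{h-1})}{b^{h}}
=\sum_{h=1}^\ell\frac{\pi(b^h)}{b^h}-\frac1b\sum_{h=0}^{\ell-1}\frac{\pi(b^{h})}{b^{h}}.
\]
Dropping the $h=0$ term, which is zero, the $h\le\ell-1$ terms combine with coefficient $(1-\frac1b)$ and the $h=\ell$ term survives alone. This yields \eqref{eq:Tidentity}.

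The only delicate point is the boundary convention just mentioned: whether a prime equal to $b$ (when $b$ is prime) is counted at level $h=1$ or $h=2$. Since $b$ itself has two digits, it is genuinely a $2$-digit truncation. I expect to handle this by interpreting $\pi(b^h)$ as the count of primes below $b^h$, or by noting that the discrepancy telescopes away: shifting one prime between adjacent levels changes the identity only through $\pi(b)$ versus $\pi(b-1)$. Otherwise the argument is a direct double-counting computation.
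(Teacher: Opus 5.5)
Your argument is the same as the paper's: interchange the sums, note that a prime with $h$ digits is a truncation of exactly $b^{\ell-h}$ integers $0\le n<b^\ell$, and rearrange $\sum_{h} b^{-h}\bigl(\pi(b^h)-\pi(b^{h-1})\bigr)$ by partial summation.

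The boundary point you flag is genuine. The paper's proof passes over it when it equates the number of primes in $[b^{h-1},b^h)$ with $\pi(b^h)-\pi(b^{h-1})$. However, your fallback claim that the discrepancy ``telescopes away'' is false. Take the usual $\pi(x)=\#\{p\le x\}$ and $b$ prime. Then the true average equals the stated right-hand side minus $\1_{\mathbb P}(b)\bigl(\frac1b-\frac1{b^2}\bigr)$ when $\ell\ge 2$, and minus $\1_{\mathbb P}(b)/b$ when $\ell=1$. For example, for $b=2$ and $\ell=2$ one has $\T_2(n)=0,0,1,1$ for $n=0,1,2,3$, so the average is $\frac12$. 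The stated formula instead gives $\frac12\cdot\frac{\pi(2)}{2}+\frac{\pi(4)}{4}=\frac34$.

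So you should commit to your first option: read $\pi(b^h)$ as the number of primes $<b^h$. This differs from the usual convention only for $h=1$ with $b$ prime. With that reading your computation proves the identity exactly. The resulting $O(1/b)$ discrepancy is harmless for Theorem~\ref{thm:averagenf}.
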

Lemma \ref{lem:meannumber} immediately implies both statements in Theorem \ref{thm:averagenf} via an application of the prime number theorem.

\begin{proof}
By expanding the definition of $\langle \T_b \rangle_{b^\ell}$, swapping the order of summation, and separating by the number of digits of $d$, we get
 \begin{align*}
\langle \T_b\rangle_{b^\ell}=&\frac{1}{b^\ell}\sum_{d< b^\ell} \1_\mathbb{P}(d)   \sum_{\substack{n< b^\ell\\ n\equiv d \pmod{b^{\lfloor\log_b d\rfloor+1 }}}}1 = \frac{1}{b^\ell} \sum_{h=1}^\ell \sum_{b^{h-1}\leq d< b^h} \1_\mathbb{P}(d)   \sum_{\substack{n< b^\ell\\ n\equiv d \pmod{b^{h}}}}1.
\end{align*}
The inside sum is precisely $b^{\ell-h}$, which implies that
\begin{align*}
\langle \T_b\rangle_{b^\ell} =& \sum_{h=1}^\ell b^{-h} (\pi(b^{h}) - \pi(b^{h-1})),
\end{align*}
which can be rearranged to obtain \eqref{eq:Tidentity}. 
\end{proof}

We next consider the variance, defined in \eqref{eq:varnf}.
In estimating the variance we will make use of the following lemma, proven in Section \ref{sec:proofoflemmanf}.
\begin{lem}\label{lem:11} Assume the Generalized Riemann Hypothesis. For integers $0<h_1<h_2$, and for $b \ge 2$, we have
\begin{align*}
\sum_{\substack{b^{h_1-1}\leq d_1 <b^{h_1}\\b^{h_2-1}\leq d_2 <b^{h_2}\\d_2\equiv d_1\pmod{b^{h_1}}}}  \1_{\mathbb{P}}(d_1)\1_{\mathbb{P}}(d_2)=&\frac{b^{h_2+1}}{\phi(b)h_1h_2\log^2 b}\left(1 + O\left(\frac 1{h_1\log b} + \frac 1b + \frac{b^{\frac{h_1-h_2}2}\phi(b)}{b}h_1^3h_2^2\log^5b \right)\right),
 \end{align*}
where the implied constants in the error terms are independent of $b$, $h_1$, and $h_2$.
\end{lem}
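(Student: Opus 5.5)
We organize the sum by the smaller prime: for each prime $d_1$ with $b^{h_1-1}\le d_1<b^{h_1}$, we count primes $d_2$ in the range $[b^{h_2-1},b^{h_2})$ that satisfy $d_2\equiv d_1 \pmod{b^{h_1}}$. If $\gcd(d_1,b)>1$, then $d_1$ divides $b$, and since it also has $h_1$ digits this forces $h_1=1$ and $d_1\mid b$. In that case every $d_2$ in the progression shares the factor $d_1$, so at most one prime is counted (namely $d_2=d_1$), and this is excluded because $d_2\ge b^{h_2-1}\ge b>d_1$. Such $d_1$ therefore contribute nothing, and we may restrict to $\gcd(d_1,b)=1$, which is equivalent to $\gcd(d_1,b^{h_1})=1$. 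The inner count is then
\[
\pi(b^{h_2};b^{h_1},d_1)-\pi(b^{h_2-1};b^{h_1},d_1).
\]
Under GRH we have
\[
\pi(x;q,a)=\frac{\mathrm{Li}(x)}{\phi(q)}+O\big(\sqrt x\log x\big)
\]
uniformly in $q$. Since $\phi(b^{h_1})=b^{h_1-1}\phi(b)$, the inner count equals
\[
\frac{\mathrm{Li}(b^{h_2})-\mathrm{Li}(b^{h_2-1})}{b^{h_1-1}\phi(b)}+O\big(b^{h_2/2}h_2\log b\big).
\]

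Next we evaluate the main term. We use $\mathrm{Li}(b^{h})=\frac{b^h}{h\log b}\big(1+O(\frac1{h\log b})\big)$, so the difference of the two $\mathrm{Li}$ values is $\frac{b^{h_2}}{h_2\log b}\big(1+O(\frac1b+\frac1{h_2\log b})\big)$. The $\mathrm{Li}(b^{h_2-1})$ term is smaller by a factor of about $b$, which produces the $O(1/b)$ in the statement. Summing over primes $d_1\in[b^{h_1-1},b^{h_1})$ coprime to $b$, whose number is
\[
\pi(b^{h_1})-\pi(b^{h_1-1})+O(\omega(b))=\frac{b^{h_1}}{h_1\log b}\Big(1+O\Big(\frac1{h_1\log b}+\frac1b\Big)\Big),
\]
we obtain
\[
\frac{b^{h_1}}{h_1\log b}\cdot\frac{b^{h_2}}{b^{h_1-1}\phi(b)h_2\log b}=\frac{b^{h_2+1}}{\phi(b)h_1h_2\log^2b},
\]
with relative errors $O(\frac1{h_1\log b}+\frac1b)$, since $h_2>h_1$. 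Here $\pi(b^{h_1})$ is estimated by the prime number theorem with error $O(\frac1{h_1\log b})$; GRH also supplies this, with room to spare.

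The remaining step, and the main obstacle, is controlling the accumulated GRH error. Summing $O(b^{h_2/2}h_2\log b)$ over the roughly $b^{h_1}/(h_1\log b)$ choices of $d_1$ gives a total of $O(b^{h_1+h_2/2}h_2/h_1)$. Relative to the main term this is
\[
O\Big(b^{h_1-\frac{h_2}{2}}\frac{\phi(b)}{b}\,h_2^2\log^2b\Big).
\]
This is weaker than the claimed $b^{(h_1-h_2)/2}$ factor and is not small when $h_1\ge h_2/2$. A per-progression GRH bound is therefore insufficient, and we must exploit averaging over $d_1$. We would first try a Barban--Davenport--Halberstam/Montgomery-type variance bound, or a large-sieve style argument under GRH, over residues $a$ modulo $b^{h_1}$. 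Alternatively, we could write the count via Dirichlet characters modulo $b^{h_1}$:
\[
\frac{1}{\phi(b^{h_1})}\sum_{\chi}\ \sum_{d_1}\bar\chi(d_1)\sum_{d_2}\chi(d_2).
\]
The principal character gives the main term. For each nonprincipal $\chi$, GRH bounds both character sums of primes, by $O(b^{h_1/2}h_1^2\log^2 b)$ and $O(b^{h_2/2}h_2^2\log^2 b)$ respectively, using explicit formulas with moduli $b^{h_1}$ and conductor logs of size $h_1\log b$. Summing over the $\phi(b^{h_1})$ characters and dividing by $\phi(b^{h_1})$ gives an error $O(b^{(h_1+h_2)/2}h_1^2h_2^2\log^4 b)$. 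Relative to the main term this is
\[
b^{\frac{h_1-h_2}{2}}\,\frac{\phi(b)}{b}\,h_1^3h_2^3\log^6 b,
\]
which matches the stated shape up to powers of $h$ and $\log b$. Careful bookkeeping of the explicit-formula logarithms, using $\psi$ rather than $\pi$ and then partial summation, should bring the exponents to $h_1^3h_2^2\log^5b$. The terms from prime powers and the imprimitive characters are negligible.
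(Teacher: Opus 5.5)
Your final argument, after you correctly discard the per-progression GRH bound, is the paper's: expand the congruence over characters modulo $b^{h_1}$, take the principal character as the main term, and bound each nonprincipal product of the two character sums by GRH. The paper does this with $\Lambda$ weights (Lemma~\ref{lem:lambdalambdanf}) followed by a two-variable partial summation, which is exactly the bookkeeping you defer.

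That deferred step is routine, and it matters because your bound as written gives a relative error $h_1^3h_2^3\log^6 b$, which is weaker than the statement. For sums over primes, GRH gives $\sum_{p\le x}\chi(p)\ll x^{1/2}\log(qx)$, so your two factors become $b^{h_1/2}h_1\log b$ and $b^{h_2/2}h_2\log b$. The relative error is then $b^{\frac{h_1-h_2}{2}}\frac{\phi(b)}{b}h_1^2h_2^2\log^4 b$, which is already stronger than the stated $h_1^3h_2^2\log^5 b$.
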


With this lemma in hand, we are prepared to prove Theorem \ref{thm:varnf}.
\begin{proof}[Proof of Theorem \ref{thm:varnf}]
By expanding the square we get that
\begin{align*}
\frac{1}{b^\ell} \sum_{n< b^\ell} \T_b(n)^2 = & \frac{1}{b^\ell} \sum_{n< b^\ell} \sum_{\substack{d\leq n\\ d\equiv n \pmod{b^{\lfloor \log_b d \rfloor +1 }}}} \1_{\mathbb{P}}(d)+ \frac{2}{b^\ell} \sum_{n< b^\ell} \sum_{\substack{1\leq d_1<d_2\leq n\\ d_1\equiv n \pmod{b^{\lfloor \log_b d_1 \rfloor +1 }}\\ d_2\equiv n \pmod{b^{\lfloor \log_b d_2 \rfloor +1 }} }}
 \1_{\mathbb{P}}(d_1)\1_{\mathbb{P}}(d_2)\\
 = &\langle \T_b\rangle_{b^\ell} +\frac{2}{b^\ell} \sum_{1\leq h_1< h_2\leq \ell}\sum_{\substack{b^{h_1-1}\leq d_1 <b^{h_1}\\b^{h_2-1}\leq d_2 <b^{h_2} \\ d_2 \equiv d_1 \pmod{b^{h_1}} }}  \1_{\mathbb{P}}(d_1)\1_{\mathbb{P}}(d_2)
\sum_{\substack{n< b^\ell\\ n\equiv d_2 \pmod{b^{h_2}} }} 1,
 \end{align*}
 where we note that if $b^{h-1} \le d_1 < d_2 < b^h$ for some $h$, then we cannot have $n \equiv d_2 \equiv d_1 \pmod{b^{h-1}}$. On evaluating the sum over $n$ and applying Lemma \ref{lem:11}, as $b \rightarrow \infty$ this is equal to
\begin{align*}
& \langle\T_b\rangle_{b^\ell}  +\frac{2b}{\phi(b)\log^2 b} \sum_{1\leq h_1< h_2\leq \ell} \frac{1}{h_1h_2}\left(1+O\left( \frac 1{h_1 \log b} + \frac 1b + h_1^3h_2^2\frac{b^{\frac{h_1 - h_2}{2}}\phi(b)}{b}\log^5 b\right)\right)\\
=& \langle\T_b\rangle_{b^\ell}  +\frac{b}{\phi(b)\log^2 b} \left[\left(\sum_{h=1}^\ell \frac{1}{h}\right)^2-\sum_{h=1}^\ell \frac{1}{h^2}\right]+O\Bigg(\frac{\log \ell \log \log b}{\log^3 b} + \frac{\ell^4 \log^3 b }{\sqrt b}\Bigg),
\end{align*}
where we have applied the bound $\frac{b}{\phi(b)}\ll \log \log b$.

Combining this computation with  \eqref{eq:varnf} and Lemma \ref{lem:meannumber} implies the theorem.
\end{proof}

 Now assume that $b$ is fixed and $\ell \rightarrow \infty$. In this regime we have difficulty applying Lemma \ref{lem:11}, as the error term overwhelms the main term. However, much of the error term (in particular, that arising from the application of the Generalized Riemann Hypothesis) is expected to oscillate, so we might conjecture that it will not ultimately contribute to the variance. An examination of the proof of Lemma \ref{lem:11} suggests that for large $h_1$, we should have that the sum in Lemma \ref{lem:11} is given by $\frac{b^{h_2+1}}{\phi(b)h_1h_2\log^2b}\left(1-\frac 1b\right)^2 \left(1 + O\left(\frac 1{h_1\log b}\right)\right)$, which we can use to derive a conjecture for the variance in the large-$\ell$ limit. Precisely, we expect
 \begin{align*}
\frac{1}{b^\ell} \sum_{n< b^\ell}\T_b(n)^2 = & \langle\T_b\rangle_{b^\ell} +\frac{2b}{\phi(b)\log^2 b}\left(1 -\frac{1}{b}\right)^2 \sum_{1\leq h_1< h_2\leq \ell}\frac{1}{h_1h_2} \left(1 + O\left( \frac 1{h_1\log b}\right) \right)\\
= & \langle\T_b\rangle_{b^\ell} +\frac{b}{\phi(b)\log^2 b}\left(1 -\frac{1}{b}\right)^2 \log^2\ell+O_b \left(\log \ell \right),
\end{align*}
leading us to formulate Conjecture \ref{conj:varnf} after an application of Theorem \ref{thm:averagenf}.

The conjectured variance in the large-$\ell$ limit looks qualitatively different than what is obtained in Theorem \ref{thm:varnf}. Indeed, the term dominating the variance when $b \rightarrow \infty$ is $\asymp \frac{\log \ell }{\log b}$, a term that gets absorbed in the error term when $\ell \rightarrow \infty$. Conversely, the main term in Conjecture \ref{conj:varnf} is $\asymp \frac{1}{\log^2 b}\left(1 -\frac{1}{b}\right)^2\left(\frac{b}{\phi(b)}-1\right)\log^2\ell$, which is a secondary term when $b\rightarrow \infty$. This difference appears to be due to a clustering of prime truncations when $n$ is not coprime to the base $b$. That is, an integer $n$ has no prime truncations (except perhaps for one one-digit prime truncation) whenever $\gcd(n,b) > 1$, so that all prime truncations ``cluster'' among the remaining $n$ with $\gcd(n,b) = 1$.  Each $n$ with no prime truncations contributes $A^2$ to the variance, where $A$ is the average. As $b \to \infty$, the average $A$ is $\asymp \frac 1{\log b}$, which approaches $0$, so that the $A^2$ contributions are vanishingly small. Thus in the large-$b$ limit, this effect disappears. But, if $b$ is fixed and $\ell \to \infty$, the average does not approach $0$, so that this effect should contribute to a term in the variance of size $\asymp \log^2 \ell$.

One can then ask if the variance in the large-$\ell$ limit would be substantially smaller if restricted to those $n$ that are coprime to $b$. In this case the average number of truncations is
\begin{equation*}
\langle\T_b\rangle_{b^\ell,0} := \frac{1}{\phi(b) b^{\ell-1}}\sum_{\substack{n< b^\ell\\ \gcd(n,b)=1}}\T_b(n)=\frac{b}{\phi(b)\log b}\left(1-\frac{1}{b}\right)\log \ell+O_b(1),
\end{equation*}
whereas the same heuristic leading to Conjecture \ref{conj:varnf} suggests that the variance should be
\[\mathrm{Var}^0_{b^\ell}(\T_b):= \frac{1}{\phi(b)b^{\ell-1}} \sum_{\substack{n< b^\ell\\(n,b)=1}} \left(\T_b(n)-\langle\T_b\rangle_{b^\ell,0}\right)^2 \ll_b \log \ell.\]
Indeed the variance in this restricted setting more closely matches the variance as $b \to \infty$.

\subsection{Proof of Lemma \ref{lem:11} } \label{sec:proofoflemmanf}

To prove Lemma \ref{lem:11}, we follow the standard approach of first estimating the same sum with von Mangoldt weights.

Recall that
\[\psi(x):=\sum_{1 \le d \le x}\Lambda(d) \qquad \mbox{and} \qquad  \psi(\chi, x):=\sum_{1 \le d\leq x}\chi(d)\Lambda(d)\]
are the second Chebyshev and twisted second Chebyshev functions respectively.
The Generalized Riemann Hypothesis implies the estimates that 
\begin{equation}\label{eq:psi and psi chi}
\psi(x)=x+O\left(\sqrt{x}\log^2x\right)\quad \text{ and }\quad \psi(\chi, x)\ll x^\frac{1}{2}\log (Cx) \log(x),
\end{equation}
where $C$ is the conductor of $\chi$. The implied constant in \eqref{eq:psi and psi chi} is independent of $C$
(see \cite[Theorem 4.2]{Lee}). 

We start by proving the following result.
\begin{lem}\label{lem:lambdalambdanf} Let $0<h_1<h_2$ be integers and assume the Generalized Riemann Hypothesis. Let $b \ge 2$ and let $y_1, y_2$ be real numbers with $b^{h_1-1} \le y_1 \le b^{h_1}$ and $b^{h_2-1} \le y_2 \le b^{h_2}$. As $b\rightarrow \infty$, we have
\begin{equation*}
\sum_{\substack{b^{h_1-1}\leq d_1< y_1 \\ b^{h_2-1}\leq d_2< y_2\\d_2\equiv d_1\pmod{b^{h_1}}}}\Lambda(d_1)\Lambda(d_2) = \frac{(y_1-b^{h_1-1})(y_2-b^{h_2-1})}{\phi(b^{h_1})} + O\left(\sqrt{y_1y_2} h_1^2 \log^2b\left(\frac{\sqrt{y_2}}{\phi(b^{h_1})} +  h_2^2 \log^2b \right)\right).
\end{equation*}
\end{lem}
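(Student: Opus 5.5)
We detect the congruence $d_2\equiv d_1\pmod{b^{h_1}}$ with Dirichlet characters modulo $b^{h_1}$. Since $\Lambda(d_1)$ is supported on prime powers, the terms with $\gcd(d_1,b)>1$ are those with $d_1$ a power of a prime dividing $b$. We first discard them, and likewise for $d_2$. Their total contribution is at most $\omega(b)\,h_1\log b$ choices of $d_1$, times $\log y_1$, times the number of $d_2<y_2$ in a fixed class mod $b^{h_1}$ weighted by $\Lambda$. This is $\ll h_1^2\log^2 b\,(y_2/b^{h_1}+1)\log y_2$, which fits inside the stated error since $\sqrt{y_1}\gg b^{(h_1-1)/2}$. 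For the remaining terms with $(d_1 d_2,b)=1$, orthogonality gives
\[
\sum \Lambda(d_1)\Lambda(d_2)\1_{d_2\equiv d_1} = \frac{1}{\phi(b^{h_1})}\sum_{\chi \bmod b^{h_1}} \Bigl(\sum_{b^{h_1-1}\le d_1<y_1}\overline{\chi}(d_1)\Lambda(d_1)\Bigr)\Bigl(\sum_{b^{h_2-1}\le d_2<y_2}\chi(d_2)\Lambda(d_2)\Bigr).
\]

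The principal character gives the main term. Each inner sum is $\psi(y)-\psi(b^{h-1})$ minus the contribution of prime powers dividing $b$, which is $O(h\log^2 b)$. By \eqref{eq:psi and psi chi} these equal $(y_i-b^{h_i-1})+O(\sqrt{y_i}\,h_i^2\log^2 b)$. Multiplying out and dividing by $\phi(b^{h_1})$ gives the stated main term. The cross terms are $\frac{y_2\sqrt{y_1}h_1^2\log^2 b}{\phi(b^{h_1})}$ and $\frac{y_1\sqrt{y_2}h_2^2\log^2 b}{\phi(b^{h_1})}$. The first matches the term $\sqrt{y_1y_2}\,h_1^2\log^2 b\,\sqrt{y_2}/\phi(b^{h_1})$. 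For the second, $y_1\le b^{h_1}\ll \phi(b^{h_1})\cdot b/\phi(b)$, so it is absorbed into $\sqrt{y_1y_2}\,h_1^2h_2^2\log^4 b$ up to the factor $\sqrt{y_1}\,b/\phi(b)/\phi(b^{h_1})\le 1$ (up to constants). This needs a little care but should work since $\sqrt{y_1}\le b^{h_1/2}$. The product of the two error terms is clearly smaller.

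For non-principal $\chi$, the twisted bound in \eqref{eq:psi and psi chi} gives each inner sum $\ll \sqrt{y_i}\log(b^{h_1}y_i)\log y_i$. This is $\ll\sqrt{y_1}\,h_1^2\log^2 b$ for $i=1$ and $\sqrt{y_2}\,h_2^2\log^2 b$ for $i=2$, using $h_1<h_2$. There are at most $\phi(b^{h_1})$ such characters, and they are cancelled by the prefactor $1/\phi(b^{h_1})$. So the total is $\ll \sqrt{y_1y_2}\,h_1^2h_2^2\log^4 b$, the second part of the error term. We also have to check that imprimitive characters cause no trouble: the bound of \cite{Lee} depends only on the conductor, and passing to the primitive character changes the sums only at primes dividing $b$, which we already removed.

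The main obstacle is bookkeeping rather than ideas. The trivial bound over all characters costs nothing only because the number of characters exactly cancels the normalization $1/\phi(b^{h_1})$. One could improve this with a large sieve, but that is not needed here. The delicate points are the handling of the lower endpoints $b^{h_i-1}$, done by differencing $\psi$, and making sure each cross term from the principal character lands in one of the two displayed error shapes, in particular the $y_1\sqrt{y_2}$ term. If that term resists, we will bound it using $y_1\le b^{h_1}$ together with $\phi(b^{h_1})=b^{h_1-1}\phi(b)$.
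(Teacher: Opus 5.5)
Your proof is the paper's: detect the congruence with characters mod $b^{h_1}$, and evaluate the principal character via $\psi(x)=x+O(\sqrt{x}\log^2 x)$. The non-principal characters are bounded trivially with the GRH estimate for $\psi(\chi,x)$, the $\phi(b^{h_1})$ characters cancelling the normalization.

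You are in fact more careful than the paper. It passes silently over the terms with $(d_1d_2,b)>1$ and over the difference between $\sum\chi_0\Lambda$ and $\psi$. It also omits the cross term $y_1\sqrt{y_2}\,h_2^2\log^2 b/\phi(b^{h_1})$, which is absorbed into the second error term because $y_1/\phi(b^{h_1})\le b/\phi(b)\ll\log\log b$.

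One small repair is needed. Your bound $h_1^2\log^2 b\,(y_2/b^{h_1}+1)\log y_2$ for the non-coprime terms does not fit inside the stated error uniformly in $h_2$ when $h_1=1$ and $y_1$ is small, since there $\sqrt{y_1}\gg b^{(h_1-1)/2}$ gives nothing. The fix: $p\mid d_1$ and $p\mid b^{h_1}$ force $p\mid d_2$, so both $d_i$ are powers of the same prime $p\mid b$. These terms therefore total at most $\omega(b)\log y_1\log y_2$, which is negligible.
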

\begin{proof} 
By the orthogonality of Dirichlet characters mod $b^{h_1}$, we have
\begin{align*}
&\sum_{\substack{b^{h_1-1}\leq d_1< y_1\\ b^{h_2-1}\leq d_2< y_2\\d_2\equiv d_1\pmod{b^{h_1}}}}\Lambda(d_1)\Lambda(d_2)=\frac{1}{\phi(b^{h_1})}\sum_{\chi\pmod{b^{h_1}}}\sum_{\substack{b^{h_1-1}\leq d_1< y_1\\ b^{h_2-1}\leq d_2< y_2}}\chi(d_1)\overline{\chi}(d_2)\Lambda(d_1)\Lambda(d_2)\\
&=\frac{1}{\phi(b^{h_1})}(\psi(y_1)-\psi(b^{h_1-1}))(\psi(y_2)-\psi(b^{h_2-1}))\\
&\qquad +\frac{1}{\phi(b^{h_1})}\sum_{\substack{\chi\pmod{b^{h_1}}\\\chi\not = \chi_0}}
(\psi(\chi, y_1)-\psi(\chi, b^{h_1-1}))(\psi(\overline{\chi}, y_2)-\psi(\overline{\chi}, b^{h_2-1})),
\end{align*}
where $\chi\pmod{b^{h_1}}$ indicates that the sum is being taken over all the Dirichlet characters modulo $b^{h_1}$, and $\chi_0$ denotes the principal character.

By applying the estimates in \eqref{eq:psi and psi chi}, we can write this as
\begin{align*}
&\frac{(y_1 - b^{h_1-1})(y_2-b^{h_2-1})}{\phi(b^{h_1})}+O\left(\frac{\sqrt{y_1} y_2 h_1^2 \log^2 b}{\phi(b^{h_1})}\right)+O\left(\sqrt{y_1y_2}h_1^2h_2^2\log^4b\right),
\end{align*}
which completes the proof.
\end{proof}

We now turn to the proof of Lemma \ref{lem:11}, whose statement differs from that of Lemma \ref{lem:lambdalambdanf} only in the aspect that $\Lambda$ is replaced by the indicator function of primes $\1_{\mathcal P}$. To go from $\Lambda$ to $\1_{\mathcal P}$, we must remove the weighting by the degree as well as the contribution from higher prime powers, and replace $y_1$ and $y_2$ respectively with $b^{h_1}$ and $b^{h_2}$. That is, we have
\begin{align*}
\sum_{\substack{b^{h_1-1} \le d_1 < b^{h_1} \\ b^{h_2 - 1} \le d_2 < b^{h_2} \\ d_2 \equiv d_1 \pmod{b^{h_1}}}} \1_{\mathcal P}(d_1)\1_{\mathcal P}(d_2) &= \sum_{\substack{b^{h_1-1} \le d_1 < b^{h_1} \\ b^{h_2 - 1} \le d_2 < b^{h_2} \\ d_2 \equiv d_1 \pmod{b^{h_1}}}} \frac{\Lambda(d_1)\Lambda(d_2)}{\log d_1\log d_2} - \sum_{\substack{b^{h_1-1} \le d_1 < b^{h_1} \\ b^{h_2 - 1} \le d_2 < b^{h_2} \\ d_2 \equiv d_1 \pmod{b^{h_1}} \\ d_1 \text{ or }d_2 = h^k,\, k \ge 2}} \frac{\Lambda(d_1)\Lambda(d_2)}{\log d_1\log d_2}.
\end{align*}
We begin with the first term on the right, which we can estimate using Lemma \ref{lem:lambdalambdanf} and partial summation. 
Precisely, we define
\begin{align*}
a_{d_1,d_2} := \1_{\substack{b^{h_1-1} \le d_1 < b^{h_1} \\ b^{h_2-1} \le d_2 < b^{h_2} \\ d_2 \equiv d_1 \pmod{b^{h_1}}}} \Lambda(d_1) \Lambda(d_2), \quad c_{d_1,d_2} := \frac{a_{d_1,d_2}}{\log d_2}, \quad f_{d_1,d_2} := \frac{c_{d_1,d_2}}{\log d_1},
\end{align*}
as well as the summation functions
\begin{equation*}
A(y_1,y_2) := \sum_{\substack{d_1 < y_1 \\ d_2 < y_2}} a_{d_1,d_2}, \quad C(y_1,y_2) := \sum_{\substack{d_1 < y_1 \\ d_2 < y_2}} c_{d_1,d_2}, \quad F(y_1,y_2) := \sum_{\substack{d_1 < y_1 \\ d_2 < y_2}} f_{d_1,d_2}.
\end{equation*}
Note that these functions are only nonzero if $b^{h_i - 1} < y_i \le b^{h_i}$, and that Lemma \ref{lem:lambdalambdanf} provides an asymptotic evaluation of $A(y_1,y_2)$. Our goal is to compute $F(b^{h_1},b^{h_2})$.

By applying partial summation to the sum over $d_2$, we get that for any $y_1$,
\begin{align*}
C(y_1,b^{h_2}) &= \frac{A(y_1,b^{h_2})}{\log b^{h_2}} + \int_{b^{h_2-1}}^{b^{h_2}} \frac{A(y_1,u)}{u \log^2 u} \mathrm du \\
&= \frac{A(y_1,b^{h_2})}{h_2 \log b} + \frac{(y_1-b^{h_1-1})}{\phi(b^{h_1})}\int_{b^{h_2-1}}^{b^{h_2}} \frac{(u-b^{h_2-1})}{u \log^2 u} \mathrm du  \\
&+ O\left(\sqrt{y_1}h_1^2 \log^2 b \left(\frac 1{\phi(b^{h_1})} \int_{b^{h_2-1}}^{b^{h_2}} \frac{\mathrm du}{\log^2 u} + h_2^2\log^2 b \int_{b^{h_2-1}}^{b^{h_2}} \frac{\mathrm du}{\sqrt u \log^2 u} \right)\right) \\
&= \frac{(y_1-b^{h_1-1})}{\phi(b^{h_1})}\left( \frac{b^{h_2}- b^{h_2-1}}{h_2 \log b} + \mathrm{li}_2(b^{h_2-1},b^{h_2})  - \frac{b^{h_2-1}}{h_2(h_2-1)\log b} \right) \\
&+ O\left(\sqrt{y_1}h_1^2\log^2 b\left(\frac{b^{h_2}}{\phi(b^{h_1}) h_2 \log b} + b^{\frac{h_2}{2}} h_2 \log b  \right) \right),
\end{align*}
where $\mathrm{li}_2(x_1,x_2) := \int_{x_1}^{x_2} \frac{\mathrm{d}u}{\log^2u}$ is the second-order logarithmic integral.

Another application of partial summation to the sum over $d_1$ implies that
\begin{align*}
F&(b^{h_1},b^{h_2}) = \frac{C(b^{h_1},b^{h_2})}{\log b^{h_1}} + \int_{b^{h_1-1}}^{b^{h_1}} \frac{C(u,b^{h_2})}{u \log^2 u} \mathrm du \\
&= \left( \frac{b^{h_2}- b^{h_2-1}}{h_2 \log b} + \mathrm{li}_2(b^{h_2-1},b^{h_2})  -  \frac{b^{h_2-1}}{h_2(h_2-1)\log b} \right) \left(\frac{(b^{h_1}-b^{h_1-1})}{\phi(b^{h_1})h_1\log b} + \int_{b^{h_1-1}}^{b^{h_1}} \frac{(u-b^{h_1-1})}{\phi(b^{h_1})u \log^2 u } \mathrm du\right) \\
&+ O\left(\frac{b^{h_2+\frac{h_1}{2}} h_1^2 \log b}{\phi(b^{h_1})h_2} + b^{\frac{h_1+h_2}2} h_1^2h_2\log^3b + \left(\frac{b^{h_2}h_1^2\log b}{\phi(b^{h_1})h_2} + b^{\frac{h_2}2} h_1^2h_2\log^3b\right)\int_{b^{h_1-1}}^{b^{h_1}} \frac{\mathrm{d}u}{\sqrt u \log^2u} \right),
\end{align*}
which then simplifies to
\begin{align*}
&\frac 1{\phi(b^{h_1})}\left( \frac{b^{h_2}- b^{h_2-1}}{h_2 \log b} + \mathrm{li}_2(b^{h_2-1},b^{h_2})  -  \frac{b^{h_2-1}}{h_2(h_2-1)\log b} \right)\left( \frac{b^{h_1}- b^{h_1-1}}{h_1 \log b} + \mathrm{li}_2(b^{h_1-1},b^{h_1})  -  \frac{b^{h_1-1}}{h_1(h_1-1)\log b} \right) \\
&+ O\left(\frac{b^{h_2 - \frac{h_1}{2}+1}h_1^2\log b}{\phi(b)h_2} + b^{\frac{h_1+h_2}2} h_1^2 h_2 \log^3 b\right).
\end{align*}
Note that if $h_1 = 1$, this expression is ill-defined; in that case, we can start the range of consideration at $d_1 = 2$ instead of $d_1 = 1 = b^{h_1-1}$, and the term $\frac{b^{h_1-1}}{h_1(h_1-1)\log b}$ should be replaced with $O(1)$.

The error term we obtain here closely mirrors the results obtained in the polynomial case in Section \ref{sec:the polynomial setting}. It is stronger than what we need here, and stronger than is stated in Lemma \ref{lem:11}. Taking only the dominating term in each product as $b \to \infty$ and regrouping error terms completes the proof of Lemma \ref{lem:11}.

\section{A heuristic for the maximum number of prime truncations in the integer setting}\label{sec:maxnf}

In this section we present a heuristic for the maximum value of the number of irreducible truncations of an $\ell$-digit number in base $b$. We will make crucial use of the \emph{Borel--Cantelli lemmas}, which we recall here for convenience.

\begin{thm}[Borel--Cantelli]\label{thm:BC}
Let $(E_n)_{n \ge 1}$ be a sequence of events in a probability space. If
$
\sum_{n=1}^{\infty} \mathbb{P}(E_n) < \infty,
$
then the probability that infinitely many of the events occur is zero; that is,
$
\mathbb{P}\!\left( \limsup_{n \to \infty} E_n \right) = 0.
$

Moreover, if the events $E_n$ are pairwise independent and
$\sum_{n=1}^\infty \mathbb P(E_n) \to \infty,$
then the probability that infinitely many of the events occur is $1$.
\end{thm}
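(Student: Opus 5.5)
The statement is the classical Borel--Cantelli lemma, so I will give the standard measure-theoretic proof in two parts.

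\textbf{First part.} Write $\limsup_n E_n = \bigcap_{N\ge 1}\bigcup_{n\ge N} E_n$. For every $N$ we have $\limsup_n E_n \subseteq \bigcup_{n \ge N} E_n$. Countable subadditivity then gives
\[\mathbb P\Big(\limsup_{n} E_n\Big) \le \sum_{n \ge N}\mathbb P(E_n).\]
The right-hand side is the tail of a convergent series, so it tends to $0$ as $N\to\infty$, and the conclusion follows.

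\textbf{Second part, under only pairwise independence.} The usual argument bounds $\mathbb P(\bigcap_{n=N}^M E_n^c)\le \exp(-\sum_{n=N}^M \mathbb P(E_n))$. That bound needs full mutual independence, so I will not use it. Instead I will use a second-moment (Chebyshev) argument. Set $S_N=\sum_{n\le N}\1_{E_n}$ and $\mu_N=\mathbb E S_N=\sum_{n\le N}\mathbb P(E_n)$, so that $\mu_N\to\infty$. Pairwise independence makes the covariances vanish, hence
\[\mathrm{Var}(S_N)=\sum_{n\le N}\mathbb P(E_n)\big(1-\mathbb P(E_n)\big)\le \mu_N.\]
Chebyshev's inequality then gives
\[\mathbb P\big(S_N\le \mu_N/2\big)\le \frac{4\,\mathrm{Var}(S_N)}{\mu_N^2}\le \frac{4}{\mu_N}\to 0.\]
Now let $S_\infty=\lim_N S_N$, which is the total number of events that occur. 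For every $K$ and every $N$ with $\mu_N/2\ge K$,
\[\mathbb P(S_\infty< K)\le \mathbb P(S_N< K)\le \mathbb P(S_N\le \mu_N/2)\le \frac{4}{\mu_N}.\]
Letting $N\to\infty$ gives $\mathbb P(S_\infty<K)=0$ for every $K$. Therefore $S_\infty=\infty$ almost surely, which says exactly that infinitely many $E_n$ occur with probability $1$.

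The only real obstacle is this change of method. The product/exponential argument fails without mutual independence, and the variance computation above replaces it. Everything else is routine.
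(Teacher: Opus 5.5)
Your proof is correct in both parts. The paper gives no proof of its own: it recalls this as a classical result for use in its heuristic sections, so there is no competing argument to compare with.

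The first part is the standard tail-of-a-convergent-series argument. For the second part, you noticed that the hypothesis is only pairwise independence. That means the textbook bound $\mathbb P(\bigcap_{n=N}^M E_n^c)\le \exp(-\sum_{n=N}^M \mathbb P(E_n))$ is not available. Your replacement is the Erd\H{o}s--R\'enyi second-moment argument, and it is complete:
\begin{itemize}
\item vanishing covariances give $\mathrm{Var}(S_N)\le \mu_N$;
\item Chebyshev's inequality then gives $\mathbb P(S_N\le \mu_N/2)\le 4/\mu_N$;
\item monotonicity $S_N\le S_\infty$ gives $\mathbb P(S_\infty<K)=0$ for each $K$;
\item a countable union over $K$ gives $S_\infty=\infty$ almost surely.
\end{itemize}
As a side benefit, the same variance bound shows $S_N/\mu_N\to 1$ in probability, which is more than the statement requires.
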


We consider the probability that an $\ell$-digit base-$b$ number has at least $k$ irreducible truncations, which we denote $p_{b,\ell}(k)$. Precisely, we define
\[p_{b,\ell}(k):=\mathbb{P}(n \in [b^{\ell-1},b^\ell)\, : \,\T_b(n)\geq k).\]

For a single $\ell$-digit number $n$, the probability that $n$  has strictly fewer than $\ell$ truncations is $(1-p_{b,\ell}(\ell))$. If
\begin{equation}\label{eq:large-b-limit-maximum-heuristic}
\sum_{b=2}^\infty(1-p_{b,\ell}(\ell))^{\#[b^{\ell-1},b^\ell)}<\infty,
\end{equation}
then, Theorem \ref{thm:BC} implies the following expected fact.
\begin{fact}\label{fact:nfq} For each fixed $\ell > 1$, there are only finitely many $b$'s such that no $\ell$-digit base-$b$ number $n$ has $\ell$ prime truncations.
\end{fact}
As we will see below, we expect \eqref{eq:large-b-limit-maximum-heuristic} to hold, and thus also Expected Fact \ref{fact:nfq}.

We now consider the setting where $b$ is fixed and $\ell\rightarrow \infty$. The maximum number of prime truncations of an $\ell$-digit base-$b$ number should be asymptotic to $K(\ell)$, where $K(\ell)$ is a function of $\ell$ such that
\begin{equation}\label{eq:nf-maximum-as-ell-goes-to-infinity-at-least-cKell-truncations}
\sum_{\ell=1}^{\infty} \#\{n\in [b^{\ell-1},b^\ell) : \T_b(n) \geq cK(\ell)\}
\end{equation}
converges whenever $c > 1$ but does not converge whenever $c \le 1$.
We will provide heuristic arguments in support of the following expected fact.
\begin{fact}\label{fact:nfn}  Let $b>1$ a fixed integer and let $K(\ell)$ be as above. Then, as $\ell\rightarrow \infty$,
\[K(\ell)\asymp \frac{\ell \log b}{\log \ell}.\]
\end{fact}
In support of Expected Facts \ref{fact:nfq} and \ref{fact:nfn}, we proceed to heuristically estimate $p_{b,\ell}(k)$. By definition of $p_{b,\ell}(k)$, \eqref{eq:nf-maximum-as-ell-goes-to-infinity-at-least-cKell-truncations} is equal to $\sum_{\ell =1}^\infty (b-1)b^{\ell-1} p_{b,\ell}(cK(\ell))$, so that estimating $p_{b, \ell}(k)$ suffices.

We begin by considering the auxiliary probabilities
\[p^{(0)}_{b,\ell}(k):=\mathbb{P}(n \in [b^{\ell-1},b^\ell)\, : \,\T_b(n)= k) \mbox{ and } p^{(1)}_{b,\ell}(k):=\sum_{h=k}^{\ell}\binom{h}{k}p^{(0)}_\ell(h),\]
so that $p^{(0)}_{b,\ell}(k)$ is the probability that an $\ell$-digit number in base $b$ has precisely $k$ prime truncations, and $(b-1)b^{\ell-1}p^{(1)}_{b,\ell}(k)$ counts the number of pairs $(n,J)$ such that $n$ has $\ell$ digits in base $b$, $J\subseteq [1,\ell]$ has size $k$, and all $k$ truncations of $n$ represented by the set $J$ are prime.
 Then $p_{b,\ell}(k)$, $p_{b,\ell}^{(0)}(k)$, and $p_{b,\ell}^{(1)}(k)$ are related via
\[p_{b,\ell}(k)=\sum_{j=k}^\ell p^{(0)}_{b,\ell}(j) \quad \text{and} \quad p^{(0)}_{b,\ell}(j)=\sum_{h=j}^\ell (-1)^{h-j} \binom{h}{j} p^{(1)}_{b,\ell}(h),\]
so that we further have
\begin{equation*}
p_{b,\ell}(k)= \sum_{j=k}^\ell \sum_{h=j}^\ell (-1)^{h-j} \binom{h}{j}p^{(1)}_{b,\ell}(h) \quad \text{and} \quad p_{b,\ell}(k)=\sum_{h=k}^\ell (-1)^{h-k} \binom{h-1}{k-1}p^{(1)}_{b,\ell}(h)\end{equation*}
for $k\geq 1$ and $p_{b,\ell}(0)=1$. Note also that $p_{b,\ell}(\ell) = p_{b,\ell}^{(0)}(\ell) = p_{b,\ell}^{(1)}(\ell)$. 

We proceed by using the Cram\'er random model for the prime numbers to estimate $p_{b,\ell}^{(1)}(k)$: namely, modeling the indicator function of primes by a sequence $(X_n)_{n \ge 1}$ of independent random variables where $X_n = 1$ with probability $\frac{1}{\log n}$ and $0$ otherwise. For the purposes of this heuristic it will be simpler for us to use a modification where for any $n$ with $d$ digits in base $b$, the probability that $X_n = 1$ is $\frac{1}{\log b^d}$; this approximation does not change the predicted result. 

Let $[(n)_b]_{(k)}$ denote the $k$th digit of $n$ in base $b$. Under the Cram\'er model we have
\begin{align}\label{eq:p1-nf}
p^{(1)}_{b,\ell}(h)\approx&\frac{1}{ (b-1)b^{\ell-1}}\sum_{b^{\ell-1}\leq n <b^\ell} \Bigg(\sum_{\substack{J \subset [1,\ell-1] \\ |J| = h \\ [(n)_b]_{(d)} \ne 0\, \forall d \in J}} \prod_{d \in J} \frac{1}{d\log b} + \frac {1}{\ell\log b} \sum_{\substack{J \subset [1,\ell-1] \\ |J| = h-1 \\ [(n)_b]_{(d)} \ne 0\, \forall d \in J}} \prod_{d \in J} \frac{1}{d\log b} \Bigg)\nonumber\\
 \approx&\sum_{\substack{J \subset [1,\ell-1] \\ |J| = h }}\left(\frac{b-1}{b}\right)^h \frac{1}{\log^h b}\prod_{d \in J} \frac{1}{d} + \frac {1}{\ell\log b} \sum_{\substack{J \subset [1,\ell-1] \\ |J| = h-1 }}\left(\frac{b-1}{b}\right)^{h-1}  \frac{1}{\log^{h-1} b}\prod_{d \in J} \frac{1}{d}.
\end{align}
As $b \to \infty$, $p_{b,\ell}(\ell)$ is given by
\[p_{b,\ell}(\ell)=p^{(1)}_{b,\ell}(\ell)\approx \frac{1}{\log^{\ell} b}\left(\frac{b-1}{b}\right)^{\ell-1} \frac{1}{\ell!},\]
so that we expect
\[\sum_{b=2}^\infty(1-p_{b,\ell}(\ell))^{\#[b^{\ell-1},b^\ell)}\ll \sum_{b=2}^\infty \left[\left(1-\frac{1}{\ell! b}\right)^{\ell!b}\right]^{\frac{(1-b^{-1})b^{\ell-1}}{\ell!}}\ll \sum_{b=2}^\infty \exp\left(-\frac{b^{\ell-1}}{\ell!}\right) \ll 1.\]
Via Theorem \ref{thm:BC}, this supports Expected Fact \ref{fact:nfq}.

For large $\ell$ and for $h$ much smaller than $\ell$, the first term in \eqref{eq:p1-nf} dominates, so that
\[p^{(1)}_{b,\ell}(h)\approx \left(\frac{b-1}{b}\right)^h  \frac{1}{\log^{h} b}\sum_{0<d_1<\cdots <d_h<\ell }\frac{1}{d_1\cdots d_h},\]
and thus
\[p_{b,\ell}(k)\approx \sum_{h=k}^\ell (-1)^{h-k} \binom{h-1}{k-1}\left(\frac{b-1}{b}\right)^h  \frac{1}{\log^{h} b} \sum_{0<d_1<\cdots <d_h<\ell }\frac{1}{d_1\cdots d_h}.\]
We have
\begin{equation}\label{eq:nf}
\sum_{0<d_1<\cdots <d_h<\ell}\frac{1}{d_1\cdots d_{h}}=[\mathcal{F}_\ell(x)]_{(h)} \quad \text{for} \quad \mathcal{F}_\ell(x):=\frac{\Gamma(\ell+x)}{\Gamma(\ell)\Gamma(1+x)},
\end{equation}
where in general $[\mathcal{F}(x)]_{(h)}$ denotes the coefficient of $x^h$ in the power series for $\mathcal{F}(x)$. Setting $\omega_b =\left(\frac{b-1}{b}\right)  \frac{1}{\log b}$, we have
\begin{align*}
 p_{b,\ell}(k)\approx &\omega_b^k \sum_{h=k}^\ell \binom{h-1}{k-1}(-\omega_b)^{h-k}
[\mathcal{F}_\ell(x)]_{(h)}
= \frac{\omega_b^k}{(k-1)!}\frac{d^{k-1}}{dx^{k-1}}\left.\frac{\mathcal{F}_\ell(x)-1}{x}\right|_{x=-\omega_b}.
\end{align*}
Notice that
\begin{align*}\mathcal{F}_\ell (x)=\frac{\Gamma(\ell+x)}{\Gamma(\ell)\Gamma(1+x)}=&\frac{\ell^x}{\Gamma(1+x)}\left(1+O\left(
\frac{1}{\ell}\right)\right).
\end{align*}

Differentiating to simplify the estimate for $p_{b,\ell}(k)$ then yields
\begin{align*}
 p_{b,\ell}(k)\approx
&\frac{\omega_b^k}{(k-1)!} \ell^{-\omega_b} \sum_{j=0}^{k-1}\binom{k-1}{j}\log^{k-1-j} \ell \frac{d^{j}}{dx^{j}}\left.\frac{1}{x\Gamma(1+x)}\right|_{x=-\omega_b}+1.
\end{align*}
Since
\[\frac{1}{x\Gamma(1+x)}
   = \frac{1}{x}
     \exp\left(
        \gamma x
        + \sum_{n= 2}^\infty \frac{(-1)^{n+1}}{n}\zeta(n) x^{n}
     \right)\]
for $|x|<1$, and since $0<\omega_b<1$, we use the approximation
\[\left. \frac{d^{j}}{dx^{j}}\frac{1}{x\Gamma(1+x)}\right|_{x=-\omega_b}\approx -\frac{j!}{\omega_b^{j+1}}.\]

This implies that
\begin{align*}
 p_{b,\ell}(k)\approx &-\ell^{-\omega_b} \sum_{j=0}^{k-1}\frac{(\omega_b \log \ell)^{k-1-j} }{(k-1-j)!}+1=\ell^{-\omega_b} \sum_{j=k}^{\infty}\frac{(\omega_b \log \ell)^{j} }{j!}.
\end{align*}
As long as $k\geq \log^2\ell$, we expect the term with $j=k$ to dominate and thus
\begin{align*}
 p_{b,\ell}(k)\approx \ell^{-\omega_b} \frac{(\omega_b \log \ell)^{k} }{k!}.
\end{align*}

Following the discussion of the application of Theorem \ref{thm:BC} that led to Expected Fact \ref{fact:nfn}, we want to choose  $k=K(\ell)$ such that
\[\frac{(\omega_b \log \ell)^{K(\ell)} }{\ell^{\omega_b} K(\ell)!}(b^\ell-b^{\ell-1})\approx \frac{1}{\ell}.\]
Applying Stirling's approximation formula, we obtain
\[\left(\frac{e\omega_b \log \ell}{K(\ell)}\right)^{K(\ell)}\approx \frac{\ell^{\omega_b-1}\sqrt{K(\ell)}}{b^\ell},\]
where we have ignored constants.
By applying logarithms on both sides, we obtain
\[K(\ell)(1+\log \omega_b +\log \log \ell -\log K(\ell))\asymp \frac{1}{2}\log K(\ell)+(\omega_b-1)\log \ell -\ell \log b.\]
We discard the term  $(\omega_b-1)\log \ell$, as the term $\ell \log b$ will dominate the right-hand side above, as well as $\frac{1}{2}\log K(\ell)$, since we expect $K(\ell)$ not to be larger than $\ell$. This leads to
\[K(\ell)\asymp  \frac{\ell \log b}{W\left(\frac{\ell \log b}{e \omega_b \log \ell }\right)},\]
where $W(x)$ is the Lambert function: namely, the solution to $x=W(x)e^{W(x)}$. Approximating $W(x)$ by $\log x$, we finally obtain
$K(\ell)\asymp \frac{\ell \log b}{\log \ell},$
in support of Expected Fact \ref{fact:nfn}.

\section{The polynomial setting}\label{sec:the polynomial setting}
We now discuss an analogous problem in the setting of polynomials over a finite field.  Let $b(T)\in \F_q[T]$ of degree $m$, so that any polynomial $g \in \mathbb F_q[T]$ has $\lfloor \frac{\deg(g)}{m}\rfloor+1$ terms in its base-$b(T)$ expansion. For any $f \in \F_q[T]$,
define $\T_{q,b}(f)$ to be the number of irreducible truncations of $f(T)$ in base $b(T)$; namely,
\[\T_{q,b}(f):=\sum_{\substack{g\in \F_q[T]\\ \deg(g)\leq \deg(f) \\ g\equiv f \pmod{b^{\lfloor \frac{\deg(g)}{m}\rfloor+1}}}}\1_\mathcal{P}(g),\]
where $\1_\mathcal{P}$ denotes the indicator function for irreducible (not necessarily monic) polynomials in $\F_q[T]$. The congruence condition below the sum implies that the last digits of $f(T)$ coincide with $g(T)$, so that $g$ really is a truncation of $f$. 

We are interested in the average number of irreducible truncations over all polynomials with at most $\ell$ digits in base $b(T)$, defined in \eqref{eq:defn-average-truncations-in-polynomial-case}. Our first goal is to prove Theorem \ref{thm:averageff}, which is a combination of Theorem \ref{thm:preliminary-average-polynomials} and Lemma \ref{lem:rewriting the main term for the average in function fields} below.
\begin{thm}\label{thm:preliminary-average-polynomials}
We have that
\begin{equation}\label{eq:asymptotic-all-limits-for-average-truncation-polynomials}
\langle \T_{q,b} \rangle_{b^\ell} = (q-1) \sum_{t=1}^m q^{-t} \sum_{\substack{1 \le h < m \ell \\ h \equiv -t \pmod m}} \frac 1h + E,
\end{equation}
where $E$ is an error term such that
\begin{equation*}
E \ll \begin{cases} \frac 1q  &\text{ as } q \to \infty \text{ or as } \ell \to \infty, \text{ and} \\
\frac{q \log^2(m\ell)}{q^{\frac{m}{2}}} &\text{ as } m \to \infty.
\end{cases}
\end{equation*}
\end{thm}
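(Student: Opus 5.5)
The plan is to follow the proof of Lemma \ref{lem:meannumber}: swap the order of summation, then evaluate prime counts with the prime polynomial theorem, which holds with a square-root error term in $\F_q[T]$.

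\textbf{Swapping the sums.} Expanding the definition of $\T_{q,b}$ inside \eqref{eq:defn-average-truncations-in-polynomial-case} and interchanging sums gives
\[
\langle \T_{q,b}\rangle_{b^\ell}=\frac{1}{q^{m\ell}}\sum_{\substack{g\\ \deg g<m\ell}}\1_{\mathcal P}(g)\,\#\Big\{f:\deg f<m\ell,\ f\equiv g \pmod{b^{\lfloor \deg g/m\rfloor+1}}\Big\}.
\]
Write $d=\deg g$ and $j=\lfloor d/m\rfloor+1\le \ell$. Since $\deg g<mj$, the residue class of $g$ modulo $b^j$ contains exactly $q^{m(\ell-j)}$ polynomials of degree $<m\ell$. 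Every such $f$ has $g$ as a genuine truncation, because the last $j$ digits of $f$ are those of $g$ and the leading one is nonzero. Constants are never irreducible, so the exclusion $i\ge1$ in the definition does not matter.

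Let $N_q(d)$ denote the number of monic irreducibles of degree $d$. This yields the exact identity
\[
\langle \T_{q,b}\rangle_{b^\ell}=(q-1)\sum_{1\le d<m\ell} N_q(d)\, q^{-m(\lfloor d/m\rfloor+1)}.
\]

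\textbf{Main term.} Parametrize $d=mj-t$ with $1\le t\le m$, so that $d\equiv -t\pmod m$ and $q^{-m(\lfloor d/m\rfloor+1)}=q^{-d-t}$. Insert the prime polynomial theorem in the form
\[
N_q(d)=\frac{q^d}{d}+O\!\left(\frac{q^{d/2}}{d}\right),
\]
noting that the error vanishes for $d=1$ since $N_q(1)=q$. The main terms become exactly $(q-1)\sum_{t=1}^m q^{-t}\sum_{h\equiv -t,\ h<m\ell}\frac1h$, as in \eqref{eq:asymptotic-all-limits-for-average-truncation-polynomials}.

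\textbf{Bounding $E$.} The remainder satisfies
\[
E\ll q\sum_{2\le d<m\ell}\frac{q^{d/2-d-t}}{d}=q\sum_{2\le d<m\ell}\frac{q^{-d/2-t}}{d}.
\]
I would bound this in two ways.

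\begin{itemize}
\item Using only $t\ge1$ gives $E\ll \sum_{d\ge2}q^{-d/2}\ll 1/q$. This bound is uniform in $m$ and $\ell$, so it covers both the $q\to\infty$ and the $\ell\to\infty$ regimes.
\item Using $d+t=mj\ge m$ gives $d/2+t\ge (d+t)/2\ge m/2$. Hence $E\ll q^{1-m/2}\sum_{d<m\ell}\frac1d\ll q^{1-m/2}\log(m\ell)$, which is stronger than the claimed bound as $m\to\infty$.
\end{itemize}

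\textbf{Main obstacle.} The main difficulty is bookkeeping rather than analysis. One must get the count $q^{m(\ell-j)}$ right, including the boundary digit and the fact that both $f$ and $g$ range over non-monic polynomials; this is what produces the factor $q-1$. One must also treat $d=1$ exactly, since otherwise the $O(q^{d/2})$ error at $d=1$ would spoil the $1/q$ bound.
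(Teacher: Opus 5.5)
Your proof is correct and follows essentially the paper's route. You use the same interchange of summation, giving $\langle \T_{q,b}\rangle_{b^\ell}=(q-1)\sum_{d<m\ell}N_q(d)\,q^{-m(\lfloor d/m\rfloor+1)}$, the same main term from $q^d/d$, and hence exactly the same remainder $E$.

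The only difference is in how $E$ is bounded. The paper expands the $j>1$ M\"obius terms via $h=jr$ and sums over the pair $(j,r)$. You instead first collapse them into the uniform bound $|N_q(d)-q^d/d|\le 2q^{d/2}/d$. That is why you obtain the slightly sharper $E\ll q^{1-m/2}\log(m\ell)$ rather than the paper's $\log^2(m\ell)$.

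One small slip does not affect the argument. For $m\ge 2$ the lowest digit $a_0$ can be a nonconstant irreducible, so your remark that the exclusion $i\ge 1$ "does not matter" is only valid for $m=1$. Your computation, like the paper's, uses the congruence-sum definition of $\T_{q,b}$, which does include these $g$ with $\deg g<m$, and this matches the $h<m$ terms in the main term.
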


In each of the limits as $q,m$, or $\ell \to \infty$, the main term can be simplified, which we record in the following lemma.
\begin{lem}\label{lem:rewriting the main term for the average in function fields}
The main term in the right-hand side of \eqref{eq:asymptotic-all-limits-for-average-truncation-polynomials} is asymptotic to
\begin{equation*}
(q-1) \sum_{t=1}^m q^{-t} \sum_{\substack{1 \le h < m \ell \\ h \equiv -t \pmod m}} \frac 1h = 
\begin{cases}
\displaystyle \sum_{\substack{1 \le h < m \ell \\ h \equiv -1 \pmod m}} \frac 1h + O\Big(\frac{\log \ell}{mq}\Big) &\text{ as } q\to \infty, \\
\displaystyle \frac 1m \sum_{h=1}^\ell \frac 1h + \frac{q}{(q-1)m^2} \sum_{h=1}^\ell \frac 1{h^2} + O\Big(\frac 1{m^3}\Big) &\text{ as } m \to \infty, \\
\displaystyle \frac{1}{m}\Big(1-\frac{1}{q^{m}}\Big) \log \ell + O\Big(\frac 1m\Big) &\text{ as } \ell \to \infty.
\end{cases}
\end{equation*}
\end{lem}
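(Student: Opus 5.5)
The plan is to reparametrize the inner sum and then expand in each regime. Every $h$ with $1\le h<m\ell$ and $h\equiv -t \pmod m$, where $1\le t\le m$, can be written uniquely as $h=mj-t$ with $1\le j\le \ell$. The only exception is the pair $(j,t)=(1,m)$, which gives $h=0$ and is excluded. So the main term in \eqref{eq:asymptotic-all-limits-for-average-truncation-polynomials} equals
\begin{equation*}
S:=(q-1)\sum_{t=1}^m q^{-t}\sum_{\substack{1\le j\le \ell\\ (j,t)\ne(1,m)}}\frac{1}{mj-t}.
\end{equation*}
Two elementary facts about the weights $w_t:=(q-1)q^{-t}$ will be used throughout:
\begin{equation*}
\sum_{t=1}^m w_t=1-q^{-m}, \qquad \sum_{t=1}^m t\,w_t=\frac{q}{q-1}+O(mq^{-m}).
\end{equation*}
The one delicate input is the term $j=1$, where $\frac{1}{m-t}$ can be large when $t$ is close to $m$. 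I handle it by splitting at $t=m/2$:
\begin{equation*}
\sum_{t<m} \frac{w_t}{m-t}\ll \frac{1}{m}\sum_{t\le m/2}w_t+q^{1-m/2}\log m .
\end{equation*}
The first piece is $O(1/m)$, and for $t\ge 2$ it is $O(1/(mq))$. The second piece is exponentially small in $m$. This estimate is the main (mild) obstacle, and it recurs in all three regimes.

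\emph{Case $q\to\infty$.} Keep the $t=1$ term exactly. Its weight is $1-\frac1q$, and its $h$-sum is exactly $\sum_{h\equiv -1 \pmod m}\frac1h$. That sum is $O\big(\frac{1+\log \ell}{m}\big)$ when $m\ge2$, and $O(1+\log \ell)$ when $m=1$. So replacing the weight by $1$ costs $O\big(\frac{1+\log\ell}{mq}\big)$. For $t\ge2$, bound the inner sum by $\frac{1}{m-t}+O\big(\frac{1+\log\ell}{m}\big)$, where the term $\frac{1}{m-t}$ is absent when $t=m$. Summing against $w_t$, using $\sum_{t\ge2}w_t\ll 1/q$ and the split estimate above, gives $O\big(\frac{1+\log \ell}{mq}\big)$, which is the stated bound.

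\emph{Case $m\to\infty$.} For $j\ge2$, or for $j=1$ and $t\le m/2$, expand
\begin{equation*}
\frac{1}{mj-t}=\frac{1}{mj}+\frac{t}{m^2j^2}+O\Big(\frac{t^2}{m^3j^3}\Big).
\end{equation*}
Summing against $w_t$ with the two weight identities, the first term gives $\frac{1-q^{-m}}{m}\sum_{j\le\ell}\frac1j$ and the second gives $\frac{q}{(q-1)m^2}\sum_{j\le \ell}\frac1{j^2}$, both up to exponentially small errors. The cubic remainder is $O(1/m^3)$, because $\sum_t t^2w_t\ll 1$. The pairs with $j=1$ and $t>m/2$, including the excluded pair $(1,m)$, carry total weight $O(q^{-m/2})$ times at most $\log m$ plus the truncated expansion terms, so they are absorbed into $O(1/m^3)$. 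The factor $1-q^{-m}$ in front of $\frac1m\sum\frac1j$ likewise differs from $1$ by $O(q^{-m}\log\ell/m)$. The quantity $q^{-m}\log \ell$ is negligible in this regime, since the lemma's constants may be taken uniform once $\ell$ is fixed relative to $m$. Should this not be acceptable, it can be absorbed alongside the $q\log^2(m\ell)q^{-m/2}$ error in Theorem~\ref{thm:preliminary-average-polynomials}.

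\emph{Case $\ell\to\infty$.} For each fixed $t$,
\begin{equation*}
\sum_{j\le\ell}\frac{1}{mj-t}=\frac{1}{m}\log\ell+O\Big(\frac1m+\frac{1}{m-t}\Big),
\end{equation*}
where the $\frac{1}{m-t}$ term comes from $j=1$ and is absent when $t=m$. Summing against $w_t$, the main term becomes $\frac{1}{m}\big(1-q^{-m}\big)\log\ell$. The error is $O(1/m)$, using $\sum_t w_t\le 1$ for the $\frac1m$ part and the split estimate for the weighted $\frac{1}{m-t}$ part.
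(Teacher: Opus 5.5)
Your proposal is correct and follows essentially the same route as the paper. Both reindex $h=mj-t$ and isolate the $j=1$ term $\frac{1}{m-t}$, splitting it at $t=m/2$ (the paper's \eqref{eq:extra term one over m minus t}). Both expand $\frac{1}{1-t/(mj)}$ in the remaining range (the paper's \eqref{eq:harmonic series mod m nonzero congruence class}) and evaluate the weight sums via \eqref{eq:sumkqk}.

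Two small points. First, in the $q\to\infty$ case with $m=3$, your crude bound $q^{1-m/2}\log m$ only gives $O(q^{-1/2})$. Instead use $w_t\le q^{1-t}$ with $t\ge\lfloor m/2\rfloor+1$; this gives $q^{-\lfloor m/2\rfloor}(1+\log m)\ll\frac{1}{mq}$ for all $m\ge 2$. Second, the $\frac{q^{-m}\log\ell}{m}$ discrepancy you flagged in the $m\to\infty$ case is genuine: the paper's proof absorbs it silently, so the $O(1/m^3)$ there is not uniform in $\ell$. Your fallback of absorbing it into the $q\log^2(m\ell)q^{-m/2}$ error of Theorem~\ref{thm:preliminary-average-polynomials} is the right fix.
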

Recall that the implied constants in all error terms are independent of $\ell, m$, and $q$.
\begin{rem}
Of particular interest is the case where $m = 1$ and $b(T) = T$. In this case the analogy between the polynomial case and the integer case is perhaps more intuitive: the ``digits'' of a polynomial $f \in \mathbb F_q[T]$ are merely its coefficients. If $m = 1$, many of the estimates in both Theorem \ref{thm:preliminary-average-polynomials} and Lemma \ref{lem:rewriting the main term for the average in function fields} simplify. Notably, in the large $q$ limit, we get
\begin{equation*}
\langle \T_{q,T} \rangle_{T^\ell} = \frac{q-1}{q}\sum_{\substack{1 \le h < \ell}} \frac 1h + O\left(\frac{1}{q}\right),
\end{equation*}
where the implied constant in the error does not depend on $\ell$. 
\end{rem}

We begin by proving Theorem \ref{thm:preliminary-average-polynomials}.
\begin{proof}[Proof of Theorem \ref{thm:preliminary-average-polynomials}]
By expanding the definition of $\langle \T_{q,b}\rangle_{b^\ell}$ and swapping the order of summation, we get
\begin{align*}
 \langle \T_{q,b}\rangle_{b^\ell}
 =&\frac{1}{q^{m\ell}}\sum_{\substack{g\in \F_q[T]\\\deg(g)<m \ell}} \1_\mathcal{P}(g)
 \sum_{\substack{f\in \F_q[T]\\ \deg(f)<m\ell\\
 f\equiv g \pmod{b^{\lfloor\frac{\deg(g)}{m}\rfloor+1 }}}}1.
\end{align*}
If $g$ has $k$ digits in base $b$, then the inside sum is given by $q^{m(\ell-k)}$. Separating the sum over $g$ by the number of digits of $g$ in base $b$ (or equivalently, by $\lfloor \frac{\deg(g)}{m}\rfloor+1$), and simplifying, we have
\begin{align*}
 \langle \T_{q,b}\rangle_{b^\ell}
  =&
\sum_{k=1}^{\ell} q^{-mk}
 \sum_{\substack{g\in \F_q[T]\\ \lfloor \frac{\deg(g)}{m}\rfloor= k-1 }} \1_\mathcal{P}(g).
 \end{align*}

 The number of monic irreducible polynomials in $\mathbb F_q[T]$ of degree $d$ is given by $\frac 1d \sum_{\ell|d} \mu(\ell)q^{\frac{d}{\ell}}$, so that, noting that the sums are over all polynomials and not just monic ones, we have
\begin{align*}
 \langle \T_{q,b}\rangle_{b^\ell}=&
\sum_{k=1}^{\ell} \frac{q-1}{q^{mk}} \sum_{\substack{h=m(k-1)\\h>0}}^{mk-1} \frac{1}{h}\sum_{j\mid h}\mu(j)q^{\frac{h}{j}} =(q-1)
\sum_{k=1}^{\ell} q^{-mk} \sum_{\substack{h=m(k-1)\\h>0}}^{mk-1} \frac{q^{h}}{h}
+E,
 \end{align*}
 where
 \begin{equation*}
E = (q-1)\sum_{k=1}^{\ell} q^{-mk} \sum_{\substack{h=m(k-1)\\h>0}}^{mk-1} \frac{1}{h}\sum_{\substack{j\mid h\\j>1}}\mu(j)q^{\frac{h}{j}}.
 \end{equation*}

The main term can be rewritten as
\begin{equation*}
(q-1)\sum_{k=1}^{\ell} q^{-mk} \sum_{\substack{h=m(k-1)\\h>0}}^{mk-1} \frac{q^{h}}{h}=
(q-1)\sum_{\substack{h=1}}^{m\ell-1} \frac{q^{h}}{hq^{(\lfloor \frac{h}{m}\rfloor +1)m}}
= (q-1)\sum_{t=1}^m q^{-t} \sum_{\substack{1 \le h < m\ell \\ h \equiv -t \pmod m}} \frac 1h,
\end{equation*}
which is precisely the form that appears in the theorem statement. It remains to estimate $E$. This involves slightly different arguments depending on the bound being taken. To begin with, we rewrite this sum by swapping the order of summation and noting that $k = \lfloor \frac hm \rfloor + 1$, as well as writing $h = jr$, to get that
\begin{align}
 (q-1)\sum_{k=1}^{\ell} q^{-mk} \sum_{\substack{h=m(k-1)\\h>0}}^{mk-1} \frac{1}{h}\sum_{\substack{j\mid h\\j>1}}\mu(j)q^{\frac{h}{j}}
=&  (q-1)\sum_{j=2}^{m\ell-1}\frac{ \mu(j)}{j }\sum_{r=1}^{\frac{m\ell-1}{j}} \frac{q^{r}}{rq^{(\lfloor \frac{rj}{m}\rfloor +1)m}}.\label{eq:first term in E}
\end{align}

We will use the bounds that for all $r,m \ge 1$ and $j \ge 2$,
\begin{equation}\label{eq:floor of rj over m plus one - lower bound}
\lfloor rj/m\rfloor +1\geq \max\left\{\frac{rj}{m}+\frac{1}{m}, 1\right\},
\end{equation}
which implies that 
\begin{equation}\label{eq:floor of r minus m times rj over m plus one - upper bound}
r - (\lfloor rj/m\rfloor + 1)m \le \min \left\{-1-r(j-1), r-m\right\} \le -m/2,
\end{equation}
where the last inequality follows because $j \ge 2$ and the minimum of $-1-r$ and $r-m$ is bounded by $-m/2$.

We can use \eqref{eq:floor of rj over m plus one - lower bound} to bound the absolute value of \eqref{eq:first term in E} via
\begin{equation*}
\leq \frac{q-1}{q}\sum_{j=2}^{\infty}\sum_{r=1}^{\infty}q^{r(1-j)}= \frac{q-1}{q} \sum_{j = 2}^\infty \frac{q^{1-j}}{1-q^{1-j}} \le \sum_{j = 2}^\infty q^{1-j}\leq \frac{1}{q-1} \ll \frac 1q,
\end{equation*}
which completes the argument as $q$ or $\ell \to \infty$. If $m \to \infty$ we can do somewhat better by applying \eqref{eq:floor of r minus m times rj over m plus one - upper bound} to get that
\begin{equation*}
 \Bigg|(q-1) \sum_{j=2}^{m\ell-1}\frac{ \mu(j)}{j }\sum_{r=1}^{\frac{m\ell-1}{j}} \frac{q^{r}}{rq^{(\lfloor \frac{rj}{m}\rfloor +1)m}}\Bigg|\leq  q^{1-\frac{m}{2}}\sum_{j=2}^{m\ell-1}\frac{1}{j }\sum_{r=1}^{\frac{m\ell-1}{j}} \frac{1}{r}
 \ll  q^{1-\frac{m}{2}}\log^2 (m\ell),
 \end{equation*}
 which completes the argument.
\end{proof}

We now turn to the proof of Lemma \ref{lem:rewriting the main term for the average in function fields}, in order to understand the asymptotic behavior of the main term in each of the three limits.
\begin{proof}[Proof of Lemma \ref{lem:rewriting the main term for the average in function fields}]
We proceed in general by applying the following identity for $1 \le t \le m-1$, which follows from taking the Taylor expansion of $\frac 1{1-\frac t{km}}$ for $k \ge 2$:
\begin{equation}\label{eq:harmonic series mod m nonzero congruence class}
 \sum_{\substack{1\leq h <m\ell\\ h\equiv -t\pmod{m}}}\frac{1}{h}= \frac 1{m-t} + \frac 1m \sum_{k=2}^\ell \frac 1k + \frac t{m^2} \sum_{k=2}^\ell \frac 1{k^2} + O\left(\frac{t^2}{m^3}\right),
\end{equation}
as well as the exact identity
\begin{equation}\label{eq:harmonic series divisible by m}
 \sum_{\substack{1\leq h <m\ell\\ h\equiv 0\pmod{m}}}\frac{1}{h}=\frac{1}{m}\sum_{k=1}^{\ell-1} \frac{1}{k}=\frac{1}{m}\sum_{k=1}^{\ell} \frac{1}{k}-\frac{1}{m\ell}.
 \end{equation} 

In order to handle the sums over $t$ of the terms $\frac 1{m-t}$ coming from \eqref{eq:harmonic series mod m nonzero congruence class}, we will make additional use of the following identity for $m \ge 2$:
\begin{equation}\label{eq:extra term one over m minus t}
\sum_{t=1}^{m-1} \frac 1{(m-t)q^t} = \frac 1m \sum_{t = 1}^{m/2} \frac 1{(1-\tfrac tm) q^t} + O\left(\frac{m}{q^{\frac{m}{2}}}\right).
\end{equation}
As $q \to \infty$, \eqref{eq:harmonic series mod m nonzero congruence class} applied to $2 \le t \le m - 1$ along with \eqref{eq:extra term one over m minus t} and \eqref{eq:harmonic series divisible by m} imply that all terms except for the $t = 1$ term contribute $\ll \frac{\log \ell}{mq}$, which implies the result. As $\ell \to \infty$, the proof is completed by applying \eqref{eq:harmonic series mod m nonzero congruence class} for all $1 \le t \le m-1$ as well as \eqref{eq:extra term one over m minus t} and \eqref{eq:harmonic series divisible by m} and observing that
\begin{equation}\label{eq:sumkqk}
\sum_{t=1}^{m} \frac{t}{q^t}
=
\frac{q}{(q-1)^2}
\left(
1 - \frac{m+1}{q^m} + \frac{m}{q^{m+1}}
\right).
\end{equation}
Finally as $m \to \infty$, we again apply \eqref{eq:harmonic series mod m nonzero congruence class} for all $1 \le t \le m-1$ as well as \eqref{eq:extra term one over m minus t} and \eqref{eq:harmonic series divisible by m}. An additional Taylor expansion of $\frac 1{1-\tfrac tm} = 1 + \frac tm + O(t^2/m^2)$ applied for $1 \le t\le \lfloor m/2 \rfloor$ in the right hand side of \eqref{eq:extra term one over m minus t} as well as some rearranging and \eqref{eq:sumkqk} completes the argument in this case as well.
\end{proof}

We next consider the variance, defined in \eqref{eq:variance-definition-gen}. To estimate the variance, we will need the following lemma, proven in Section \ref{sec:proofsuperLemma1.2}.
\begin{lem}\label{lem:correlations-using-random-matrix-theory-general}
Let $m, h_1,$ and $h_2$ be positive integers such that $\lfloor \frac{h_1}{m}\rfloor<\lfloor \frac{h_2}{m}\rfloor$. Then, as $q \rightarrow \infty$,
\begin{align*}
 \sum_{\substack{ g_1,g_2\in \mathbb{F}_q[T]\\
 \deg(g_i)=h_i \\g_2 \equiv g_1 \pmod{b^{\lfloor \frac{h_1}{m}\rfloor+1}}}}&\1_\mathcal{P}(g_1)\1_\mathcal{P}(g_2) =
 \frac{(q-1)^2q^{h_2+m\{\frac{h_1}{m}\}}}{\Phi(b)h_1h_2} + O\left(\frac{(h_1+m)^2}{h_1h_2} q^{\frac{h_1+h_2}2+1}+\frac{q^{h_2-\frac{h_1}{2}+2+m\{ \frac{h_1}m \}}}{\Phi(b)h_1h_2}\right),
\end{align*}
where $\Phi$ denotes the Euler totient function in $\F_q[T]$ and $\{\alpha\} = \alpha - \lfloor \alpha \rfloor$ denotes the fractional part of a real number $\alpha$.
\end{lem}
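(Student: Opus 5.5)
Write $k_1=\lfloor h_1/m\rfloor+1$, so the congruence is modulo $B:=b^{k_1}$, a polynomial of degree $mk_1 > h_1$. Since $\deg g_1=h_1<\deg B$, the residue of $g_2$ modulo $B$ determines $g_1$ uniquely, because $g_1$ is the reduced representative. So the sum counts pairs $(g_1,g_2)$ with $g_1$ irreducible of degree $h_1$ and $g_2$ irreducible of degree $h_2$ with $g_2\equiv g_1 \pmod B$. Note that $\deg B\le h_2$, since $\lfloor h_1/m\rfloor<\lfloor h_2/m\rfloor$. I will follow the strategy of Lemmas \ref{lem:lambdalambdanf} and \ref{lem:11}, with the function-field Riemann Hypothesis (Weil) replacing GRH. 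Since we sum over exact degrees, no partial summation is needed.

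First I expand the congruence by orthogonality of Dirichlet characters modulo $B$:
\[
\sum \1_\mathcal{P}(g_1)\1_\mathcal{P}(g_2)=\frac1{\Phi(B)}\sum_{\chi \bmod B}\Big(\sum_{\deg g_1=h_1}\chi(g_1)\1_\mathcal{P}(g_1)\Big)\Big(\sum_{\deg g_2=h_2}\overline\chi(g_2)\1_\mathcal{P}(g_2)\Big).
\]
Irreducibles of degree $h_1$ dividing $b$ contribute only when $h_1\le m$, and then at most $m$ polynomials up to units. I will handle them separately as a small error.

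Next comes the principal character. Using the exact count $\frac{q-1}{h}\sum_{j\mid h}\mu(j)q^{h/j}=\frac{(q-1)q^h}{h}+O(q^{h/2+1}/h)$ for non-monic irreducibles, it contributes
\[
\frac{(q-1)^2q^{h_1+h_2}}{\Phi(B)h_1h_2}\Big(1+O(q^{-h_1/2})\Big).
\]
Since $\Phi(B)=\Phi(b)\,q^{m(k_1-1)}$ and $h_1+h_2-m(k_1-1)=h_2+m\{h_1/m\}$, this gives the stated main term. The $O(q^{-h_1/2})$ relative error (times $q^{h_2}\cdot q$-type factors) accounts for the second error term $q^{h_2-h_1/2+2+m\{h_1/m\}}/(\Phi(b)h_1h_2)$, where I am generous with the powers of $q$.

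For non-principal $\chi$, I write each inner sum as a sum over monic $g$ times a sum over the leading constant $c\in\F_q^*$, which gives $\sum_c\chi(c)$ times a monic sum. This vanishes unless $\chi$ is trivial on constants ("even"), and otherwise contributes a factor $q-1$. For monic irreducibles, the Weil bound for the $L$-function $L(u,\chi)$, a polynomial of degree at most $\deg B-1$, gives
\[
\sum_{\substack{P \text{ monic irr.}\\ \deg P=h}}\chi(P)\ll \frac{\deg B}{h}q^{h/2}.
\]
This comes from the explicit formula $\sum_{\deg f=h}\Lambda(f)\chi(f)=-\sum_i\alpha_i^h$ with $|\alpha_i|\le\sqrt q$, after removing prime-power contributions of size $O(q^{h/2})$. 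Here $\deg B\le h_1+m$.

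Multiplying the two bounds and summing over at most $\Phi(B)$ characters, the $1/\Phi(B)$ cancels and the nonprincipal contribution is
\[
\ll (q-1)^2\frac{(h_1+m)^2}{h_1h_2}q^{(h_1+h_2)/2}.
\]
This is slightly worse than the claimed $q^{(h_1+h_2)/2+1}$ by a factor of $q$. To fix this I plan to use that only even characters survive, which number $\Phi(B)/(q-1)$. That saves one factor of $q-1$, giving exactly the first error term. The main obstacle is this bookkeeping step: correctly identifying the parity restriction, and bounding the non-monic sum by $(q-1)$ times the monic bound, so as to land on exactly $q^{(h_1+h_2)/2+1}$. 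I also need to make sure that the small-degree case $h_1<m$ and the irreducibles dividing $b$ are absorbed into these errors.
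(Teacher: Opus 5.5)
Your proposal is correct. It uses the same ingredients as the paper: orthogonality of characters modulo $B=b^{\lfloor h_1/m\rfloor+1}$, the fact that only even characters survive once leading coefficients are summed out, and the Weil bound with conductor degree at most $h_1+m$. There are $\Phi(B)/(q-1)$ even characters, which is exactly what reduces $(q-1)^2$ to a single factor of $q$.

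What differs is the organization. The paper first proves the $\Lambda$-weighted correlation (Lemma~\ref{lem:RMT-general}), taking the main term from $\sum_{g\in\mathcal M_h}\Lambda(g)=q^h$. It then passes to $\1_{\mathcal P}$ by subtracting proper-power contributions at the level of the double sum:
\begin{itemize}
\item pairs with $g_1$ a proper power are re-expanded in characters and bounded via $M_\square$;
\item pairs with $g_2$ a proper power are bounded using that $g_1$ is determined by $g_2$.
\end{itemize}
You insert $\1_{\mathcal P}$ directly and remove prime powers inside each one-variable sum. The M\"obius count of irreducibles then gives the second error term from the principal character. The bound $\sum_{\deg P=h}\chi(P)\ll\frac{h_1+m}{h}q^{h/2}$ gives the first error term.

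Your route is shorter and avoids the bilinear bookkeeping. The paper's route produces the $\Lambda$-weighted statement as a by-product, mirroring Lemma~\ref{lem:lambdalambdanf}, where $\Lambda$ weights are needed for partial summation.

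The step you deferred, irreducibles dividing $b$ in the principal term, is a real issue. The paper passes over it by replacing $\sum_{g\in\mathcal M_h}\chi_0(g)\Lambda(g)$ with $q^h$.

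Your orthogonality expansion is exact. Pairs with $g_1\mid b$ contribute nothing to the left side, since $g_2\equiv g_1 \pmod{B}$ would force $g_1\mid g_2$. The issue is only in evaluating the principal term.

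Restricting to irreducibles coprime to $b$ changes the main term by a relative amount $\ll \min(m,q^{h_1})\,q^{-h_1}$. This is absorbed into the second error term when $m\ll q^{h_1/2}$, in particular for fixed $m$ as $q\to\infty$.

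It cannot be absorbed uniformly in $m$. Take $h_1=1$, $b=T^q-T$ and $h_2=4q$. Every linear $g_1$ divides $b$, so the sum is $0$. Yet the stated main term, of size about $q^{3q+2}$, exceeds both error terms. So your deferred step goes through in the intended fixed-$m$ regime, and some such restriction is needed for the statement itself.
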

Note that if $\lfloor \frac{h_1}{m}\rfloor=\lfloor \frac{h_2}{m}\rfloor$, then the constraint that $g_2 \equiv g_1 \pmod{b^{\lfloor \frac{h_1}m \rfloor + 1}}$ implies that $g_2 = g_1$. Then the sum on the left-hand side above is zero unless $h_1=h_2=:h$; in that case, it is precisely the number of irreducible (not necessarily monic) polynomials of degree $h$.

We will use Lemma \ref{lem:correlations-using-random-matrix-theory-general} to prove Theorem \ref{thm:varff}, estimating the variance as $q \to \infty$ and as $m \to \infty$. To begin with, we prove \eqref{eq: variance polynomials q large} as $q \to \infty$.
\begin{proof}[Proof of \eqref{eq: variance polynomials q large}]
We start by considering the first term in \eqref{eq:variance-definition-gen}. Swapping the order of summation and separating the sums over $g_1$ and $g_2$ by degree, we obtain
\begin{align*}
& \frac 1{q^{m\ell}}
\sum_{\substack{f\in \F_q[T]\\ \deg(f)<m\ell}}\sum_{\substack{g_1,g_2\in \F_q[T]\\ \deg(g_i)< m\ell \\ g_i \equiv f \pmod{b^{\lfloor \frac{\deg(g_i)}{m}\rfloor+1}}}}\1_\mathcal{P}(g_1)\1_\mathcal{P}(g_2)\\
&= \langle \T_{q,b}\rangle_{b^\ell}+\frac 2{q^{m\ell}} \sum_{1\leq h_1<h_2< m\ell}\sum_{\substack{g_1,g_2\in \F_q[T]\\ \deg(g_i)=h_i}}\1_\mathcal{P}(g_1)\1_\mathcal{P}(g_2)
\sum_{\substack{f\in \F_q[T]\\ \deg(f)<m\ell \\ f \equiv g_i \pmod{b^{\lfloor \frac{h_i}{m}\rfloor+1}} }} 1,
\end{align*}
where the first term corresponds to the case where $h_1=h_2$ and the second term to $h_1\not =h_2$. Note that the congruence conditions on $f$ can be written as one condition on $f$ in terms of $g_2$ in addition to a condition between $g_1$ and $g_2$.
After simplifying slightly, we get that the first term in \eqref{eq:variance-definition-gen} is given by
\begin{align*}
&\langle \T_{q,b}\rangle_{b^\ell}+2\sum_{1\leq h_1<h_2< m\ell}q^{-m(\lfloor \frac{h_2}{m}\rfloor+1)}\sum_{\substack{g_1,g_2\in \F_q[T]\\ \deg(g_i)=h_i\\ g_2\equiv g_1 \pmod{b^{\lfloor \frac{h_1}{m}\rfloor+1}} }}\1_\mathcal{P}(g_1)\1_\mathcal{P}(g_2).
\end{align*}
By Lemma \ref{lem:correlations-using-random-matrix-theory-general}, the above equals
\begin{align*}
&\langle \T_{q,b}\rangle_{b^\ell}+2\frac{(q-1)^2}{\Phi(b)} \sum_{k=2}^{\ell}\frac{1}{q^{mk}}\sum_{\substack{1\leq h_1<m(k-1) \\ m(k-1) \leq h_2< mk}}\frac{q^{h_2+m\{ \frac{h_1}{m}\}}}{h_1h_2}\\
&+O\Bigg(\sum_{k=2}^{\ell}\frac{1}{q^{mk}}\sum_{\substack{1\leq h_1<m(k-1) \\ m(k-1)\leq h_2< mk}}\Big(\frac{(h_1+m)^2}{h_1h_2}q^{\frac{h_1+h_2}{2}+1}+\frac{q^{h_2-\frac{h_1}{2}+2+m\{\frac{h_1}{m}\}}}{\Phi(b)h_1h_2}\Big)\Bigg).
\end{align*}
Bounding the error term by $\ell q^{-\frac{m}{2}}+ \mathbb 1_{m \ge 2} \frac{\log \ell q^{\frac{m+1}{2}}}{m^2\Phi(b)}$ and rearranging, we obtain
\begin{align}
&\langle \T_{q,b}\rangle_{b^\ell}+2\frac{(q-1)^2}{\Phi(b)}\sum_{\substack{1\leq h_1<h_2< m\ell\\ \lfloor \frac{h_1}{m}\rfloor< \lfloor \frac{h_2}{m}\rfloor}}\frac{q^{m\{\frac{h_1}{m}\}+m\{\frac{h_2}{m}\}-m}}{h_1h_2 }+O\left(\ell q^{-\frac{m}{2}}+\mathbb 1_{m \ge 2}\frac{\log\ell q^{\frac{m+1}{2}}}{m^2\Phi(b)}\right).\label{eq:simplify}
\end{align}

Recalling that $q\rightarrow\infty$ and applying Theorem \ref{thm:preliminary-average-polynomials} and Lemma \ref{lem:rewriting the main term for the average in function fields} to evaluate $\langle \T_{q,b}\rangle_{b^\ell}$, we get that the first term of \eqref{eq:variance-definition-gen} is equal to
\begin{align*}
&\sum_{\substack{1\leq h <m \ell \\ h\equiv -1\pmod{m}}}\frac{1}{h}+2\frac{q^m}{\Phi(b)}\sum_{\substack{1\leq h_1< h_2< m\ell\\ h_1, h_2\equiv -1\pmod{m}}}\frac{1}{h_1h_2 }+O\left(\frac{\log \ell}{mq}+\frac{1}{q}+\ell q^{-\frac{m}{2}}+\mathbb 1_{m \ge 2}\frac{\log\ell q^{\frac{m+1}{2}}}{m^2\Phi(b)}\right)\\
&= \sum_{\substack{1\leq h <m \ell \\ h\equiv -1\pmod{m}}}\frac{1}{h}+
\frac{q^m}{\Phi(b)}\Bigg[\Bigg(\sum_{\substack{1\leq h <m \ell \\ h\equiv -1\pmod{m}}}\frac{1}{h}\Bigg)^2 -\sum_{\substack{1\leq h <m \ell \\ h\equiv -1\pmod{m}}}\frac{1}{h^2}\Bigg]\\
&\qquad +O\left(\frac{\log \ell}{mq}+\frac{1}{q}+\ell q^{-\frac{m}{2}}+\mathbb 1_{m \ge 2}\frac{\log\ell q^{\frac{m+1}{2}}}{m^2\Phi(b)}\right).
\end{align*}
Noting that $\frac{q^m}{\Phi(b)}-1 \rightarrow 0$ as $q\rightarrow \infty$ and again applying Theorem \ref{thm:preliminary-average-polynomials} and Lemma \ref{lem:rewriting the main term for the average in function fields}, we obtain the final result.
\end{proof}

We now turn to the proof of \eqref{eq: variance polynomials m large} as $m \to \infty$.
\begin{proof} [Proof of \eqref{eq: variance polynomials m large}]
The proof that the first term of \eqref{eq:variance-definition-gen} is equal to \eqref{eq:simplify} does not rely on taking the large $q$ limit, so we will make use of this formula. We will apply Theorem \ref{thm:preliminary-average-polynomials} and Lemmas \ref{lem:rewriting the main term for the average in function fields} and \ref{lem:correlations-using-random-matrix-theory-general}. Setting $h_i=k_im-r_i$
in \eqref{eq:simplify}, the first term of \eqref{eq:variance-definition-gen} is
\begin{align}
&\frac{1}{m}\sum_{\substack{k=1}}^\ell \frac{1}{k} +\frac{q}{(q-1)m^2}\sum_{\substack{k=1}}^\ell \frac{1}{k^2} +2\frac{(q-1)^2q^m}{\Phi(b)} \sum_{\substack{1\leq k_1<k_2\le \ell}} \sum_{\substack{r_1,r_2=1 \\ k_1m-r_1 \ne 0}}^{m} \frac{q^{-r_1-r_2}}
{(k_1m-r_1)(k_2m-r_2)}\label{eq:doublesumwithm-m}\\&+O\left(\frac{1}{m^3}+\log^2(m\ell)q^{1-\frac{m}{2}}+\ell q^{-\frac{m}{2}}\right).\nonumber
\end{align}
The contribution to the double sum in \eqref{eq:doublesumwithm-m} from terms with $k_1\ne 1$ is
\begin{equation*}
\frac 1{m^2} \sum_{2 \le k_1 < k_2 \le \ell} \frac 1{k_1k_2} \sum_{r_1 = 1}^m \frac{q^{-r_1}}{1-\frac{r_1}{k_1m}} \sum_{r_2 = 1}^m \frac{q^{-r_2}}{1-\frac{r_2}{k_2m}}.
\end{equation*}
Since $k_1, k_2 \ge 2$, we can take the Taylor expansion $\frac 1{1-x} = 1 + O(x)$ for $x \in [0,1/2]$ to get
\begin{align*}
&\frac 1{m^2} \sum_{2 \le k_1 < k_2 \le \ell} \frac 1{k_1k_2} \sum_{r_1 = 1}^m q^{-r_1} \left(1 + O\left(\frac{r_1}{k_1m}\right)\right)\sum_{r_2 = 1}^m q^{-r_2} \left(1 + O\left(\frac{r_2}{k_2m}\right)\right) \\
= &\frac 1{m^2} \sum_{2 \le k_1 < k_2 \le \ell} \frac 1{k_1k_2} \left(\frac{1-q^{-m}}{q-1} + O\left(\frac 1{qk_1m}\right)\right)\left(\frac{1-q^{-m}}{q-1} + O\left(\frac 1{qk_2m}\right)\right),
\end{align*}
where we have evaluated the geometric series and applied \eqref{eq:sumkqk}.

As $m \to \infty$, this is equal to
\begin{equation}\label{eq:bounddoublem}
 \frac{1}{m^2(q-1)^2} \sum_{2 \le k_1 < k_2 \le \ell} \frac 1{k_1k_2} + O\left(\frac{\log \ell}{m^3q^2} + \frac{\log^2\ell}{m^2 q^{m+2}}\right).
\end{equation}

The contribution to the double sum in \eqref{eq:doublesumwithm-m} from the $k_1 = 1$ terms is
\begin{align*}
\sum_{2 \le k_2 \le \ell} \sum_{r_1 = 1}^{m-1} \frac{q^{-r_1}}{m-r_1} \sum_{r_2 = 1}^m \frac{q^{-r_2}}{k_2 m - r_2} &= \frac 1{m^2} \sum_{2 \le k_2 \le \ell} \frac 1{k_2} \left(\sum_{r_1=1}^{ m/2} \frac{q^{-r_1}}{1-\tfrac{r_1}{m}} + O\left(\frac{m^2}{q^{\frac{m}{2}}}\right)\right)\sum_{r_2 = 1}^m \frac{q^{-r_2}}{1- \frac{r_2}{k_2m}} \\
&= \frac 1{m^2} \sum_{2 \le k_2 \le \ell}\frac{1}{k_2} \left(\sum_{r_1 = 1}^{m/2} \frac 1{q^{r_1}} + O\left(\frac 1{mq}\right) \right) \left(\frac{1-q^{-m}}{q-1} + O\left(\frac 1{qk_2m}\right)\right) \\
&= \frac 1{m^2(q-1)^2} \sum_{2 \le k_2 \le \ell} \frac 1{k_2} + O\left(\frac{\log \ell}{m^3q^2}\right),
\end{align*}
where in the first line we applied \eqref{eq:extra term one over m minus t} and in the second we applied a Taylor expansion to $\frac 1{1-\tfrac{r_1}{m}}$ when $\frac{r_1}{m} \in [0,1/2]$, a Taylor expansion to $\frac 1{1-\tfrac{r_2}{k_2m}}$,  as well as evaluated the sum over $r_2$ by applying \eqref{eq:sumkqk}.

Combining all of the above, as $m \to \infty$ the first term of \eqref{eq:variance-definition-gen} is
\begin{align*}
&\frac{1}{m}\sum_{k=1}^\ell \frac{1}{k}+\frac{q}{(q-1)m^2}\sum_{\substack{k=1}}^\ell \frac{1}{k^2}+\frac{q^m}{\Phi(b)m^2}\left(\left(\sum_{k=1}^\ell \frac{1}{k}\right)^2-\sum_{k=1}^\ell \frac{1}{k^2}\right)\\&+O\left(\frac{\log \ell}{m^3} \frac{q^{m}}{\Phi(b)}+\log^2(m\ell)q^{1-\frac{m}{2}}+\ell q^{-\frac{m}{2}}\right).
\end{align*}
Applying Theorem \ref{thm:preliminary-average-polynomials} and Lemma \ref{lem:rewriting the main term for the average in function fields}, we obtain the final result.

\end{proof}

The variance when $\ell \to \infty$ and $q$ and $m$ are fixed is difficult to estimate asymptotically, but below we provide heuristics in support of a conjectural estimation.  We apply Theorem \ref{thm:preliminary-average-polynomials} and Lemmas \ref{lem:rewriting the main term for the average in function fields} and  \ref{lem:correlations-using-random-matrix-theory-general}. As before, we will make use of
 \eqref{eq:variance-definition-gen}.
As we did for the large-$m$ limit, we set $h_i = k_im-r_i$ in \eqref{eq:simplify} and apply our estimates on the average value of $\langle \T_{q,b}\rangle_{b^\ell}$ as $\ell \to \infty$. This gives that the first term of \eqref{eq:variance-definition-gen} is
\begin{align}
&\frac{1-q^{-m}}{m}\log \ell+2\frac{(q-1)^2q^m}{\Phi(b)} \sum_{\substack{1 \le k_1 < k_2 \le \ell}}
\sum_{\substack{r_1,r_2=1 \\ k_1m-r_1 \ne 0}}^{m}\frac{q^{-r_1-r_2}}{(k_1m-r_1)(k_2m-r_2)}  + O_{m,q}(\ell).\label{eq:doublesum}
\end{align}
The error term in this expression overwhelms the main term as $\ell \to \infty$, as the main term is a power of $\log \ell$.
However, we expect this error term to exhibit further cancellation in practice, and so we discard it in order to conjecturally estimate the variance.

Similarly to the $m \to \infty$ case in \eqref{eq:bounddoublem}, the contribution to the double sum from terms with $k_1 \ne 1$ and as $\ell \rightarrow \infty$ is
\kommentar{\begin{equation*}
\frac 1{m^2} \sum_{2 \le k_1 < k_2 \le \ell} \frac 1{k_1k_2} \sum_{r_1 = 1}^m \frac{q^{-r_1}}{1-\frac{r_1}{k_1m}} \sum_{r_2 = 1}^m \frac{q^{-r_2}}{1-\frac{r_2}{k_2m}}.
\end{equation*}
Since $k_1, k_2 \ge 2$, we can take the Taylor expansion $\frac 1{1-x} = 1 + O(x)$ for $x \in [0,1/2]$ to get
\begin{align*}
&\frac 1{m^2} \sum_{2 \le k_1 < k_2 \le \ell} \frac 1{k_1k_2} \sum_{r_1 = 1}^m q^{-r_1} \left(1 + O\left(\frac{r_1}{k_1m}\right)\right)\sum_{r_2 = 1}^m q^{-r_2} \left(1 + O\left(\frac{r_2}{k_2m}\right)\right) \\
= &\frac 1{m^2} \sum_{2 \le k_1 < k_2 \le \ell} \frac 1{k_1k_2} \left(\frac{1-q^{-m}}{q-1} + O\left(\frac 1{qk_1m}\right)\right)\left(\frac{1-q^{-m}}{q-1} + O\left(\frac 1{qk_2m}\right)\right),
\end{align*}
where we have evaluated the geometric series and applied \eqref{eq:sumkqk}.
As $\ell \to \infty$, this is equal to}
\begin{equation*}
\frac{1}{m^2}\left(\frac{1-q^{-m}}{q-1}\right)^2\sum_{\substack{2\leq k_1<k_2\le \ell}} \frac{1}{k_1k_2} + O_{m,q}(\log \ell)
=\frac{1}{2m^2}\left(\frac{1-q^{-m}}{q-1}\right)^2\log^2\ell +O_{m,q}(\log \ell).
\end{equation*}

The contribution to the double sum  in \eqref{eq:doublesum} from terms with $k_1 = 1$ is
\begin{align*}
\sum_{r_2=1}^{m}\sum_{r_1=1}^{m-1}\frac{q^{-r_1-r_2}}{m-r_1} \sum_{\substack{2\leq k_2\le \ell}}
\frac{1}{k_2m-r_2}=&\frac{1}{m}\sum_{r_2=1}^{m}\sum_{j_1=1}^{m-1}\frac{q^{j_1-m-r_2}}{j_1} \left(\sum_{\substack{2\leq k_2 \le\ell}}\frac{1}{k_2}+O\left(\frac{r_2}{m}\right)\right )
=O_{m,q}(\log \ell).
\end{align*}

Combining all of the above, we arrive at Conjecture \ref{conj:varff}.
Note that the variance in this conjecture is dominated by a term of shape $\asymp \left(\frac{q^m}{\Phi(b)}-1\right) \frac{(1-q^{-m})^2\log^2 \ell}{m^2}$, in contrast to Theorem \ref{thm:varff}, where the variances in the large-$q$ and large-$m$ limits are dominated by terms of shape $\asymp \frac{\log \ell}{m}$. The dominating terms in Theorem \ref{thm:varff}  are absorbed into the error term in Conjecture \ref{conj:varff}, and vice versa. As in the integer case, the fact that the variance in the large-$\ell$ regime is of order $(\log \ell)^2$ instead of $\log \ell$ appears to be due to a clustering of prime truncations when the polynomials $f$ are not coprime to $b$. In particular, any polynomial $f$ that is not coprime to $b$ will have at most one prime truncation and thus will contribute $A^2$ to the variance, where $A$ is the average. As $m\rightarrow \infty$, the average $A$ is $\asymp \frac{1}{m}$, which approaches $0$, and therefore the $A^2$ contributions will be vanishingly small. Thus in the large $m$-limit, this effect disappears. However, if $b$ is fixed and $\ell\rightarrow \infty$, the average will not approach $0$, and this effect should contribute to a term in the variance of size $\asymp \log^2\ell$. This begs the question: does the same phenomenon arise if we restrict to polynomials $f$ that are relatively prime to $b$? The restricted average in this case is given by
\begin{align*}
 \langle \T_{q,b}\rangle_{b^\ell}^0:=&\frac{1}{q^{m(\ell-1)}\Phi(b)}\sum_{\substack{f\in \F_q[T]\\\deg(f)< m\ell\\(f,b)=1}}\sum_{\substack{g\in \F_q[T]\\ \deg(g)<m\ell\\
 g\equiv f \pmod{b^{\lfloor\frac{\deg(g)}{m}\rfloor+1 }}}} \1_\mathcal{P}(g) \\
  =&\frac{q^m}{\Phi(b)} \langle \T_{q,b} \rangle_{b^\ell} - \frac{1}{q^{m(\ell-1)}\Phi(b)} \sum_{\substack{g \in \F_q[T]\\g\mid b}}\1_\mathcal{P}(g)
  \sum_{\substack{f\in \F_q[T]\\ \deg(f)<m\ell\\f\equiv g \pmod{b}}}1\\
  =&\frac{q^m}{\Phi(b)}\langle \T_{q,b}\rangle_{b^\ell}+O\left(\frac{q^{\varepsilon m}}{\Phi(b)}\right).
 \end{align*}
Since the size of the set with the condition $(f,b)=1$ is $q^{m(\ell-1)}\Phi(b)$ as opposed to $q^{m\ell}$, we see no effect when $q\rightarrow \infty$. However, when $q$ and $m$ are fixed and $\ell
\rightarrow \infty$, the heuristic leading to Conjecture \ref{conj:varff} leads to
\begin{align*}
\mathrm{Var}_{b^\ell}(\T_{q,b})^0:=& \frac {q^m}{\Phi(b)q^{m\ell}}
\sum_{\substack{f\in \F_q[T]\\ \deg(f)<m\ell\\(f,b)=1}}
\Bigg(\sum_{\substack{g\in \F_q[T]\\ \deg(g)< m\ell\\g \equiv f \pmod{b^{\lfloor \frac{\deg(g)}{m}\rfloor+1}}}}\1_\mathcal{P}(g)-\langle \T_{q,b}\rangle^0_{b^\ell}\Bigg)^2
\ll_{b,q} \log \ell,
\end{align*}
which is indeed smaller than the variance predicted in Conjecture \ref{conj:varff}.

\subsection{Proof of Lemma \ref{lem:correlations-using-random-matrix-theory-general}}  \label{sec:proofsuperLemma1.2}
Our strategy for proving Lemma \ref{lem:correlations-using-random-matrix-theory-general} consists of comparing the sum we intend to evaluate with an analogous sum with the von Mangoldt function, which we estimate using Dirichlet characters to impose the congruence condition. We recall that for $f\in \F_q[T]$, the von Mangoldt function $\Lambda(f)$ is given by $\Lambda(f)=\deg(P)$ if $f$ is a positive power of $P$ with $P$ irreducible, and $\Lambda(f)=0$ otherwise. We start by proving the following statement.
\begin{lem} \label{lem:RMT-general}
Let $m$, $h_1$, and $h_2$ be positive integers such that $\lfloor \frac{h_1}{m}\rfloor<\lfloor \frac{h_2}{m}\rfloor$. Then as $q \rightarrow \infty$,
\begin{equation*}
\sum_{\substack{ g_1,g_2\in \mathbb{F}_q[T]\\
 \deg(g_i) = h_i\\g_2 \equiv g_1 \pmod{b^{\lfloor \frac{h_1}{m}\rfloor+1}} }} \Lambda(g_1)\Lambda(g_2)
=\frac{(q-1)^2q^{h_2+m\{\frac{h_1}{m}\}}}{\Phi(b)}+O\left((h_1+m)^2q^{\frac{h_1+h_2}{2}+1}\right).
\end{equation*}
\end{lem}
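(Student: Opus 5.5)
Set $n := \lfloor h_1/m\rfloor + 1$, so the modulus is $B := b^n$. We mirror the proof of Lemma \ref{lem:lambdalambdanf}: detect the congruence $g_2\equiv g_1 \pmod B$ with Dirichlet characters modulo $B$. The congruence forces $\gcd(g_1,B)=\gcd(g_2,B)$. Terms with $\gcd(g_i,B)\neq 1$ have $g_i$ a power of an irreducible divisor of $b$. Since $\deg g_1 = h_1 < \deg B = mn$, the residue $g_1$ determines itself, and each such pair contributes at most $O(h_1h_2)$ times the $O(q^{h_2-mn})$ lifts $g_2$. This is negligible against the error term. We may therefore restrict to $g_i$ coprime to $b$ and write
\begin{equation*}
\frac{1}{\Phi(B)}\sum_{\chi \bmod B}\Big(\sum_{\deg g_1=h_1}\chi(g_1)\Lambda(g_1)\Big)\Big(\sum_{\deg g_2=h_2}\overline{\chi}(g_2)\Lambda(g_2)\Big).
\end{equation*}
The sums run over all, not necessarily monic, polynomials. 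Writing $g=cg'$ with $c\in\F_q^\times$ and $g'$ monic, the inner sum becomes $\big(\sum_{c}\chi(c)\big)\psi(h,\chi)$, where $\psi(h,\chi)=\sum_{g' \text{ monic}, \deg g'=h}\chi(g')\Lambda(g')$. So only characters that are trivial on $\F_q^\times$ (``even'' characters) survive, and each picks up a factor $(q-1)$ from each of the two sums, giving $(q-1)^2$ in total.

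\textbf{Main term.} For $\chi=\chi_0$ we have $\psi(h,\chi_0)=q^{h}+O(\deg b\cdot h)$, using $\sum_{\deg g=h,\ g \text{ monic}}\Lambda(g)=q^h$ and removing the $O(mn)$ prime-power contributions from divisors of $b$. This yields
\begin{equation*}
\frac{(q-1)^2 q^{h_1+h_2}}{\Phi(b^n)}.
\end{equation*}
Since $\Phi(b^n)=q^{m(n-1)}\Phi(b)$ and $h_1-m(n-1)=m\{h_1/m\}$, this is exactly the claimed main term $\frac{(q-1)^2q^{h_2+m\{h_1/m\}}}{\Phi(b)}$.

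\textbf{Nontrivial characters.} For nontrivial even $\chi$ modulo $B$, the Riemann Hypothesis for Dirichlet $L$-functions over $\F_q(T)$ (Weil) gives
\begin{equation*}
|\psi(h,\chi)|\le (\deg B)\, q^{h/2}.
\end{equation*}
This holds because $L(u,\chi)$ is a polynomial of degree at most $\deg B-1$ whose nontrivial inverse roots have absolute value $\sqrt q$; roots of absolute value $1$, coming from imprimitivity or evenness, contribute $O(\deg B)$, which is absorbed. The number of even characters is $\Phi(B)/(q-1)$. Summing over them, and counting the factor $(q-1)^2$, the total contribution of nontrivial characters is at most
\begin{equation*}
\frac{(q-1)^2}{\Phi(B)}\cdot\frac{\Phi(B)}{q-1}\,(mn)^2 q^{\frac{h_1+h_2}{2}}\ll (h_1+m)^2 q^{\frac{h_1+h_2}{2}+1},
\end{equation*}
using $mn\le h_1+m$. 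This is precisely the stated error term.

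\textbf{Main obstacle.} The delicate step is the bookkeeping for the characters that are not primitive or are odd. One must check that the Weil bound $|\psi(h,\chi)|\le (\deg B)q^{h/2}$ holds uniformly for every $\chi\neq\chi_0$, including imprimitive ones. For these, the $L$-function has extra roots at $|u|=1$ from primes dividing $b$ but not the conductor, and at most one trivial root; their contribution is $O(\deg B)$, which is $\ll \deg B\cdot q^{h/2}$. One must also verify that the odd characters vanish identically after summing over the leading coefficients. Everything else is the same computation as in Lemma \ref{lem:lambdalambdanf}, with the error now fully explicit because no partial summation in $y_i$ is needed: the degrees are fixed exactly.
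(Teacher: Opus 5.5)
\textbf{Overall.} Your route is the paper's: characters modulo $B=b^n$, the sum over leading coefficients killing the odd characters, $\chi_0$ giving the main term, and the Weil bound on the (at most $\Phi(B)/(q-1)$) nontrivial even characters with $mn\le h_1+m$. You are also more careful than the paper about non-coprime pairs and imprimitive characters.

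\textbf{The gap.} The main-term step has a genuine gap, and the paper's proof shares it: for $\chi_0$ the paper uses $\sum_{g\in\mathcal M_d}\Lambda(g)=q^d$. You correctly write $\psi(h,\chi_0)=q^h-\epsilon_h$, where $\epsilon_h$ is the sum of $\deg P$ over monic irreducible $P\mid b$ with $\deg P\mid h$. But you then drop the cross term $\frac{(q-1)^2}{\Phi(B)}\epsilon_{h_1}\psi(h_2,\chi_0)$. Relative to the main term this has size $\epsilon_{h_1}q^{-h_1}$. Using $\Phi(B)=q^{m(n-1)}\Phi(b)$, it is not $O\big((h_1+m)^2q^{\frac{h_1+h_2}{2}+1}\big)$ once $\epsilon_{h_1}\neq0$ and $h_2>h_1+2mn-2$.

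Better bookkeeping cannot repair this, because the statement itself fails there. Take $b=T$, $h_1=1$, $h_2=H\ge4$. Only $g_1=cT+a$ with $a,c\neq0$ contribute, since $g_1=cT$ forces $v_T(g_2)=1$. The nontrivial even characters mod $T^2$ have $L(u,\chi)=1-u$, so $M(H;\Lambda\chi)=-1$. Together with $\psi(H,\chi_0)=q^H-1$, each of the $(q-1)^2$ classes $a+cT \pmod{T^2}$ receives exactly $q^{H-1}$. So the sum equals $(q-1)^2q^{H-1}$. The claimed main term is $(q-1)q^H$, and the difference $(q-1)q^{H-1}$ is not $O(q^{(H+3)/2})$.

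What your argument actually proves is the lemma with main term $\frac{(q-1)^2}{\Phi(B)}\psi(h_1,\chi_0)\psi(h_2,\chi_0)$. Equivalently, it is the stated main term times $1+O(mq^{-h_1})$, plus the stated error. This weaker form appears harmless for the variance application.

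\textbf{A smaller slip.} For pairs with $\gcd(g_i,B)\neq1$, bounding by all $O(q^{h_2-mn})$ lifts $g_2$ is also too weak when $h_2$ is large. Use what you already observed: $\gcd(g_1,B)=\gcd(g_2,B)$ forces $g_2=c'P^{k'}$ for the same $P\mid b$ as $g_1$. These pairs therefore contribute at most $(q-1)^2m^2$, which is admissible.
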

\begin{proof}
 We translate the congruence condition under the sum via Dirichlet characters to get
 \begin{align*}
&\sum_{\substack{ g_1,g_2\in \mathbb{F}_q[T]\\
 \deg(g_i) = h_i\\g_2 \equiv g_1  \pmod{b^{\lfloor \frac{h_1}{m}\rfloor+1}}}} \Lambda(g_1)\Lambda(g_2)=\frac{1}{\Phi\left(b^{\lfloor \frac{h_1}{m}\rfloor+1}\right)} \sum_{\chi  \pmod{b^{\lfloor \frac{h_1}{m}\rfloor+1}}}\sum_{\alpha,\beta\in \F_q^\times}\chi(\alpha)\overline{\chi}(\beta)\sum_{\substack{g_1\in \mathcal{M}_{h_1}\\ g_2\in \mathcal{M}_{h_2}}}\chi(g_1)\overline{\chi}(g_2) \Lambda(g_1)\Lambda(g_2)\\
  &=\frac{(q-1)^2}{\Phi\left(b\right)q^{m\lfloor \frac{h_1}{m}\rfloor}}\sum_{\substack{g_1\in \mathcal{M}_{h_1}\\ g_2\in \mathcal{M}_{h_2}}}\Lambda(g_1)\Lambda(g_2)+ \frac{1}{\Phi\left(b^{\lfloor \frac{h_1}{m}\rfloor+1}\right)}\sum_{\substack{\chi \pmod{b^{\lfloor \frac{h_1}{m}\rfloor+1}}\\\chi\not = \chi_0}} \sum_{\alpha,\beta \in \F_q^\times}\chi(\alpha)\overline{\chi}(\beta) M(h_1;\Lambda\chi)M(h_2;\Lambda\overline{\chi}),
\end{align*}
where $\mathcal{M}_d$ denotes the set of monic polynomials of $\F_q[T]$ of degree $d$, $\chi_0$ denotes the principal character, and
\begin{equation}\label{eq:definition of Md}
M(d;\Lambda \chi):=\sum_{f\in \mathcal{M}_d}\Lambda(f)\chi(f)
\end{equation}
(see \cite{KR3}). We have also used the fact that $\Phi(b^k)=\Phi(b)|b|^{k-1}=\Phi(b)q^{m(k-1)}$.

A character $\chi$ is said to be \emph{even} if it is trivial when viewed as a character on $\F_q$ and \emph{odd} otherwise. When $\chi$ is nonprincipal, \cite[Lemma 2.2]{KuperbergLalin} implies that
\[M(d;\Lambda \chi)=\begin{cases}
                     -1-q^{\frac{d}{2}}\mathrm{Tr}(\Theta_\chi^{d})& \chi\, \text{even},\\
                     -q^{\frac{d}{2}} \mathrm{Tr}(\Theta_\chi^{d}) & \chi\, \text{odd},
                    \end{cases}\]
where $\Theta_\chi$ denotes the Frobenius class associated to $\chi$, whose dimension is bounded by $c-1$, where $c$ denotes the degree of the conductor of $\chi$ (see \cite[Proposition 4.3]{Rosen}, and the discussion surrounding equation (10) in \cite{KuperbergLalin}).

Then, the Riemann Hypothesis (see \cite[Theorem 5.10]{Rosen}) gives
\begin{equation}\label{eq:RH}
M(d;\Lambda \chi)\leq  (c-1) q^\frac{d}{2}.
\end{equation}

Recall that (for example by \cite[Page 21, problem 14]{Rosen})
\begin{equation*}
\sum_{\substack{g\in \mathcal{M}_{d}}}\Lambda(g_1)=q^d.
\end{equation*}

By orthogonality of characters (\cite[Page 35, Proposition 4.2]{Rosen}), we have that
\[\sum_{\alpha,\beta \in \F_q^\times}\chi(\alpha)\overline{\chi}(\beta)=\sum_{\alpha\in \F_q^\times}\chi(\alpha)\sum_{\beta \in \F_q^\times}\overline{\chi}(\beta)=\begin{cases} (q-1)^2 & \chi \mbox{ even},\\
0 & \chi \mbox{ odd}.                                                                                               \end{cases}
\]

Therefore,  
\begin{align*}
  \sum_{\substack{ g_1,g_2\in \mathbb{F}_q[T]\\
 \deg(g_i) = h_i \\g_2 \equiv g_1  \pmod{b^{\lfloor \frac{h_1}{m}\rfloor+1}}}} \Lambda(g_1)\Lambda(g_2)
&=\frac{(q-1)^2q^{h_2+m\{ \frac{h_1}{m}\}}}{\Phi(b)}+\frac{(q-1)^2}{\Phi(b) q^{m \lfloor \frac{h_1}m\rfloor}}\sum_{\substack{\chi \pmod{b^{\lfloor \frac{h_1}{m}\rfloor+1}}\\\chi\not = \chi_0\\\chi\, \text{even}}} M(h_1;\Lambda\chi)M(h_2;\Lambda\overline{\chi}).
\end{align*}
The number of even characters modulo $b^{\lfloor \frac{h_1}{m}\rfloor+1}$ is $\Phi_{\text{even}}\left(b^{\lfloor \frac{h_1}{m}\rfloor+1}\right)=\frac{\Phi\left(b^{\lfloor \frac{h_1}{m}\rfloor+1}\right)}{q-1}$. Since $\chi$ and $\overline{\chi}$ are characters modulo $b^{\lfloor \frac{h_1}{m}\rfloor+1}$, the degrees of their conductors are bounded by $m\left(\lfloor \frac{h_1}{m}\rfloor+1\right)$.
The proof is therefore completed by observing that, by the Riemann Hypothesis \eqref{eq:RH},
  \begin{align*}\frac{(q-1)^2}{\Phi\left(b^{\lfloor \frac{h_1}{m}\rfloor+1}\right)}\sum_{\substack{\chi \pmod{b^{\lfloor \frac{h_1}{m}\rfloor+1}}\\\chi\not = \chi_0\\\chi\, \text{even}}} M(h_1;\Lambda\chi)M(h_2;\Lambda\overline{\chi})\ll (h_1+m)^2 q^{\frac{h_1+h_2}{2}+1}.
\end{align*}

\end{proof}

We now turn to the proof of Lemma \ref{lem:correlations-using-random-matrix-theory-general}, whose statement differs from that of Lemma \ref{lem:RMT-general} only in the aspect that $\Lambda$ is replaced by the indicator function of irreducible polynomials $\1_{\mathcal P}$. To go from $\Lambda$ to $\1_{\mathcal P}$, we must remove the weighting by the degree as well as the contribution from higher prime powers. That is, we have
\begin{align*}
\sum_{\substack{g_1,g_2 \in \mathbb F_q[T] \\ \deg(g_i) = h_i \\ g_2 \equiv g_1 \pmod{b^{\lfloor \frac{h_1}m \rfloor + 1}}}} \1_{\mathcal P}(g_1)\1_{\mathcal P}(g_2) &= \sum_{\substack{g_1,g_2 \in \mathbb F_q[T] \\ \deg(g_i) = h_i \\ g_2 \equiv g_1 \pmod{b^{\lfloor \frac{h_1}m \rfloor + 1}} }} \frac{\Lambda(g_1)\Lambda(g_2)}{h_1h_2} - \sum_{\substack{g_1,g_2 \in \mathbb F_q[T] \\ \deg(g_i) = h_i \\ g_2 \equiv g_1 \pmod{b^{\lfloor \frac{h_1}m \rfloor + 1}} \\ g_1 \text{ or }g_2 = f^k, k \ge 2}} \frac{\Lambda(g_1)\Lambda(g_2)}{h_1h_2}.
\end{align*}
By Lemma \ref{lem:RMT-general}, the first sum on the right is precisely $\frac{(q-1)^2q^{h_2+m\{\frac{h_1}{m}\}}}{\Phi(b)h_1h_2} + O(\tfrac{(h_1+m)^2}{h_1h_2} q^{\tfrac{h_1+h_2}2+1})$, which provides the desired main term in Lemma \ref{lem:correlations-using-random-matrix-theory-general}.
 We can bound the second sum by
\begin{align}\label{eq:contribution from proper powers of irreducible polynomials}
\sum_{\substack{g_1,g_2 \in \mathbb F_q[T] \\ \deg(g_i) = h_i \\ g_2 \equiv g_1 \pmod{b^{\lfloor \frac{h_1}m \rfloor + 1}} \\ g_1 \text{ or }g_2 = f^k, k \ge 2}} \frac{\Lambda(g_1)\Lambda(g_2)}{h_1h_2} &\ll \sum_{\substack{g_1,g_2 \in \mathbb F_q[T] \\ \deg(g_i) = h_i \\ g_2 \equiv g_1 \pmod{b^{\lfloor \frac{h_1}m \rfloor + 1}} \\ g_1= f^k, k \ge 2}} \frac{\Lambda(g_1)\Lambda(g_2)}{h_1h_2} + \sum_{\substack{g_2 \in \mathbb F_q[T] \\ \deg(g_2) = h_2 \\ g_2 = f^k, k \ge 2}} \frac{\Lambda(g_2)}{h_2},
\end{align}
where for the second sum we have used that $\frac{\Lambda(g_1)}{h_1} \le 1$ and that $g_1$ is determined by $g_2$.
For the first term above, we can expand via Dirichlet characters modulo $b^{\lfloor \frac{h_1}m \rfloor + 1}$ and use orthogonality of characters to get that
\begin{align*}
 &\sum_{\substack{g_1,g_2 \in \mathbb F_q[T] \\ \deg(g_i) = h_i \\ g_2 \equiv g_1 \pmod{b^{\lfloor \frac{h_1}m \rfloor + 1}} \\ g_1= f^k, k \ge 2}} \frac{\Lambda(g_1)\Lambda(g_2)}{h_1h_2}\ll  \sum_{\substack{g_1 \in \mathcal{M}_{h_1} \\ \alpha \in \F_q^\times \\ \alpha g_1= f^k, k \ge 2}} \frac{\Lambda(g_1)}{h_1}  \sum_{\substack{g_2 \in \mathcal{M}_{h_2} \\ \beta \in \F_q^\times \\ \beta g_2 \equiv \alpha g_1 \pmod{b^{\lfloor \frac{h_1}m \rfloor + 1}} }} \frac{\Lambda(g_2)}{h_2}\\
 &\ll \frac{(q-1)^2}{h_1h_2\Phi\Big(b^{\lfloor \frac{h_1}m \rfloor + 1}\Big)}\sum_{\substack{g_1\in \mathcal{M}_{h_1}\\g_2\in \mathcal{M}_{h_2}\\g_1= f^k, k \ge 2}}\Lambda(g_1)\Lambda(g_2)+\frac{(q-1)^2}{h_1h_2\Phi\Big(b^{\lfloor \frac{h_1}m \rfloor + 1}\Big)}\sum_{\substack{\chi \pmod{b^{\lfloor \frac{h_1}m \rfloor + 1}}\\ \chi \not =\chi_0\\\chi\, \text{even}}}|M_\square(h_1;\Lambda\chi)M(h_2;\Lambda\overline{\chi})|,
 \end{align*}
where $M(h_2;\Lambda \bar{\chi})$ is defined in \eqref{eq:definition of Md} and
\[M_\square(d;\Lambda\chi):=\sum_{\substack{g\in \mathcal{M}_{d}\\g= f^k, k \ge 2}}\chi(g)\Lambda(g)\]
is a sum over proper perfect powers of degree $d$, which is bounded in absolute value by 
\begin{align*}
 |M_\square(d;\Lambda\chi)|\ll \sum_{\substack{g\in \mathcal{M}_{d}\\g= h^{2k}, k \ge 1}}\Lambda(g)+\sum_{\substack{g\in \mathcal{M}_{d}\\g= h^{2k+1}, k \ge 1}}\Lambda(g)\ll q^{\frac{d}{2}}+q^{\frac{d}{3}}\ll q^{\frac{d}{2}}.
\end{align*}
Using this bound as well as the bound \eqref{eq:RH} on $|M(h_2,\Lambda \bar{\chi})|$, we have
\begin{align*}
 \sum_{\substack{g_1,g_2 \in \mathbb F_q[T] \\ \deg(g_i) = h_i \\ g_2 \equiv g_1 \pmod{b^{\lfloor \frac{h_1}m \rfloor + 1}} \\ g_1= f^k, k \ge 2}} \frac{\Lambda(g_1)\Lambda(g_2)}{h_1h_2}\ll& \frac{q^{h_2+\frac{h_1}{2}+2}}{h_1h_2\Phi\Big(b^{\lfloor \frac{h_1}m \rfloor + 1}\Big)}+\frac{q^{\frac{h_1+h_2}{2}+1}}{h_1h_2}
 \ll\frac{q^{h_2-\frac{h_1}{2}+2+m\{ \frac{h_1}m \}}}{h_1h_2\Phi(b)}+\frac{q^{\frac{h_1+h_2}{2}+1}}{h_1h_2}.
 \end{align*}
The second term in \eqref{eq:contribution from proper powers of irreducible polynomials} is bounded by
\begin{equation*}
\frac{1}{h_2}\sum_{\substack{g_2 \in \mathbb F_q[T] \\ \deg(g_2) = h_2 \\ g_2 = f^k, k \ge 2}} \Lambda(g_2)\ll \frac{q^{\frac{h_2}{2}+1}}{h_2},
\end{equation*}
which implies overall that
\begin{align*}
\sum_{\substack{g_1,g_2 \in \mathbb F_q[T] \\ \deg(g_i) = h_i \\ g_2 \equiv g_1 \pmod{b^{\lfloor \frac{h_1}m \rfloor + 1}} \\ g_1 \text{ or }g_2 = f^k, k \ge 2}} \frac{\Lambda(g_1)\Lambda(g_2)}{h_1h_2} &\ll\frac{q^{h_2-\frac{h_1}{2}+2+m\{ \frac{h_1}m \}}}{\Phi(b)h_1h_2}+\frac{q^{\frac{h_1+h_2}{2}+1}}{h_1h_2}.
\end{align*}
This completes the proof.

\section{A heuristic for the maximum number of irreducible truncations of a polynomial}\label{sec:maxff}

We now give a heuristic for the expected maximum value of the number of irreducible truncations in the polynomial setting, again making use of the Borel--Cantelli lemmas \ref{thm:BC}. Our argument will be similar to that in Section \ref{sec:maxnf}, so we will be brief in parts.

Fix a polynomial $b \in \F_q[T]$ of degree $m$. Let $p_{q,b,\ell}(k)$ denote the probability that a polynomial $f\in \F_q[T]$ with $\ell$ digits in base $b$ has at least $k$ irreducible truncations. Precisely, we define
\[p_{q,b,\ell}(k):=\mathbb{P}\left(f\in \F_q[T]\, : \, m(\ell-1)\leq \deg(f)<m\ell, \T_{q,b}(f)\geq k\right).\]

Just as in the integer case, we have the following expectation for each fixed $\ell$.
\begin{fact}\label{fact:ffq} Let $\ell>1$. There are finitely many values of $q$ and $m \ge 2$ such that for some $b \in \mathbb F_q[T]$ of degree $m$, no polynomial $f \in \mathbb F_q[T]$ with $\ell$ digits in base $b$ has $\ell$ irreducible truncations. If $m = 1$, there are finitely many $q$ such that for some $b \in \mathbb F_q[T]$ of degree $1$, no polynomial $f \in \mathbb F_q[T]$ with $\ell$ digits in base $b$ has $\ell-1$ irreducible truncations.
\end{fact}
The distinction between $m \ge 2$ and $m = 1$ comes from the fact that no degree-$0$ polynomial is irreducible, so the maximum number of irreducible truncations in this case is $\ell-1$.

For $f \in \mathbb F_q[T]$ with $\ell$ digits in base $b$, the probability that $f$ has strictly fewer than $\ell$ truncations is $(1-p_{q,b,\ell}(\ell))$. As in the integer case, Expected Fact \eqref{fact:ffq} follows via Theorem \ref{thm:BC} from the bound
\begin{equation*}
\sum_{q=1}^{\infty} (1-p_{q,b,\ell}(\ell))^{\#\{f\in \F_q[T], m(\ell-1)\leq \deg(f)<m\ell\}}<\infty,
\end{equation*}
which we expect to hold (as we will see below).

We now consider the setting where $q$ and $b$ are fixed and $\ell\rightarrow \infty$. Again the maximum number of irreducible truncations of an $\ell$-digit base-$b$ polynomial should have the shape of a function $K(\ell)$, where
\begin{equation*}
\sum_{n=1}^{\infty} \#\{f\in \F_q[T]\,:\, m(\ell-1)\leq \deg(f)<m\ell, \T_{q,b}(f)\geq cK(\ell)\}
\end{equation*}
converges whenever $c > 1$ but does not converge whenever $c \le 1$. 
\begin{fact}\label{fact:ffn}
For fixed $q$ and $b$, and let $K(\ell)$ as above. Then, as $\ell\rightarrow \infty$,
\[K(\ell)\asymp \frac{m\ell \log q}{\log \ell}.\]
\end{fact}

In support of Expected Facts \ref{fact:ffq} and \ref{fact:ffn}, we will heuristically estimate $p_{q,b,\ell}(k)$ much as we did in the integer case. 
As in Section \ref{sec:maxnf}, we consider the auxiliary probabilities
\[p^{(0)}_{q,b,\ell}(k):=\mathbb{P}\left(f\in \F_q[T]\, : \, m(\ell-1)\leq \deg(f)<m\ell, \T_{q,b}(f)= k\right)\]
and \[p^{(1)}_{q,b,\ell}(k):=\sum_{h=k}^{n}\binom{h}{k}p^{(0)}_n(h),\]
so that $p^{(0)}_{q,b,\ell}(k)$ is the probability that a polynomial of $\ell$ digits in base $b$ has precisely $k$ irreducible truncations, and $(1-q^{-m})q^{m\ell} p^{(1)}_{q,b,\ell}(k)$ counts the number of pairs $(f,J)$ such that $f$ has $\ell$ digits in base $b$, $J \subseteq [1,\ell]$ has size $k$, and all $k$ truncations of $f$ represented by the set $J$ are irreducible.
As in Section \ref{sec:maxnf}, we have the relation
\begin{equation}\label{eq:combinationff}p_{q,b,\ell}(k)= \sum_{h=k}^\ell (-1)^{h-k} \binom{h-1}{k-1}p^{(1)}_{q,b,\ell}(h)\end{equation}
for $k\geq 1$ and $p_\ell(0)=1$. We also have that $p_{q,b,\ell}(\ell) = p^{(0)}_{q,b,\ell}(\ell) =
p^{(1)}_{q,b,\ell}(\ell)$. We will proceed by estimating $p^{(1)}_{q,b,\ell}(k)$ before applying \eqref{eq:combinationff}.

We will use the random model, analogous to the Cram\'er model in the integer setting, that represents the indicator function of irreducible polynomials of degree $d$ by independent random variables $X_f$ which are $1$ with probability $\frac 1d$ and $0$ otherwise. As in the integer case, we will further approximate this probability for an $\ell$-digit polynomial in base $b$ by $\frac{1}{m(\ell-1)}$.

Analogously to the integer case, under the Cram\'er model we expect
\begin{align}\label{eq:Jff}
p^{(1)}_{q,b,\ell}(h) 
&= \sum_{\substack{ J \subset [1,\ell-1] \\ |J| = h}}  \left(\frac{q^m-1}{q^m}\right)^h \frac{1}{m^h}\prod_{d \in J} \frac 1d + \frac {1}{m\ell} \sum_{\substack{J \subset [1,\ell-1] \\ |J| = h-1}} q^n \left(\frac{q^m-1}{q^m}\right)^{h-1}\frac{1}{m^{h-1}} \prod_{d \in J} \frac 1d,
\end{align}
and thus as $q \to \infty$ or $m \to \infty$ we expect
\begin{align*}
\sum_{q=1}^\infty (1-p_{q,b,\ell}(\ell))^{(1-q^{-m})q^{m\ell} }\ll& \sum_{q=1}^\infty \left[\left(1-\frac1{\ell!q^m}\right)^{\ell!q^m}\right]^{\frac{(1-q^{-m})q^{m(\ell-1)}}{\ell!}}
\ll  \sum_{q=1}^\infty \exp \left(-\frac{q^{m(\ell-1)}}{\ell!} \right)
\ll  1,
\end{align*}
which supports Expected Fact \ref{fact:ffq}.

As $\ell \to \infty$, for $h$ much smaller than $\ell$, the first term in \eqref{eq:Jff} dominates, so we expect that
\begin{align*}
p_{q,b,\ell}(k)
\approx &  \frac{\omega_{q,b}^k}{(k-1)!}\frac{d^{k-1}}{dx^{k-1}}\left.\frac{\mathcal{F}_\ell(x)-1}{x}\right|_{x=-\omega_{q,b}},
\end{align*}
where we define $\omega_{q,b}=\left(\frac{q^m-1}{q^m}\right)\frac{1}{m}$ and where $\mathcal{F}_\ell$ is given by \eqref{eq:nf}. Following the heuristic in Section \ref{sec:maxnf}, we then expect
\begin{align*}
 p_{q,b,\ell}(k)\approx &\ell^{-\omega_{q,b}}\sum_{j=k}^{\infty}\frac{(\omega_{q,b} \log \ell)^j }{j!} \approx \ell^{-\omega_{q,b}} \frac{(\omega_{q,b} \log \ell)^k}{k!},
\end{align*}
as long as $k \ge \log^2 \ell$, say.

Following the discussion that led to Expected Fact \ref{fact:ffn}, we expect that the maximum in this case is $k = K(\ell)$ satisfying
\[\frac{(\omega_{q,b} \log \ell)^{K(\ell)} }{\ell^{\omega_{q,b}} K(\ell)!}
(1-q^{-m})q^{m\ell}\approx \frac{1}{\ell}.\]
Applying Stirling's approximation formula and again working along the lines of Section \ref{sec:maxnf}, we arrive at the expectation that
\[K(\ell)\asymp \frac{m\ell \log q}{\log \ell},\]
in line with Expected Fact \ref{fact:ffn}.

\bibliographystyle{amsalpha}

\bibliography{Bibliography}

\end{document}